\theoremstyle{plain}
  \newtheorem{thm}{Theorem}[section]
  \newtheorem{lem}[thm]{Lemma}
  \newtheorem{prop}[thm]{Proposition}
  \newtheorem{cor}[thm]{Corollary} 
\theoremstyle{definition}
  \newtheorem{defn}[thm]{Definition}
  \newtheorem{rmk}[thm]{Remark}
  \newtheorem{ex}[thm]{Example}
\theoremstyle{plain}
\numberwithin{equation}{section}
\def\om{\omega}
\def\Om{\Omega}
\def\Ga{\Gamma}
\def\dpp{\partial_+}
\def\dpm{\partial_-}
\def\dpa{{\partial_{+A}\,}}
\def\dma{{\partial_{-A}\,}}
\def\w{\wedge}
\def\tr{\mathrm{tr}}
\newcommand{\overbar}[1]{\mkern 2.5mu\overline{\mkern-2.5mu#1\mkern 1.5mu}\mkern-1.5mu}
\begin{document}

\title{
\bf\
{Symplectically Flat Connections and Their Functionals}
}

\author{Li-Sheng Tseng and Jiawei Zhou \\
\\
}

\date{October 6, 2022}
\maketitle

\begin{abstract} 

We continue our study of symplectically flat bundles.  We broaden the notion of symplectically flat connections on symplectic manifolds to $\zeta$-flat connections on smooth manifolds.  These connections on principal bundles can be represented by maps from an extension of the base's fundamental group to the structure group.  We introduce functionals with zeroes being symplectically flat connections and study their critical points. Such functionals lead to novel geometric flows.  We also describe some characteristic classes of the $\zeta$-flat bundles.
\end{abstract}

\tableofcontents

\section{Introduction}

In our previous work \cite{TZ}, we introduced the notion of symplectically flat connections on a symplectic manifold $(M^{2n}, \om)$.  For a fiber bundle $\pi:E\to M$ with covariant derivative $d_A$, and $A$ the corresponding connection on $E$, symplectically flat connections have a curvature $F$ that satisfies
\begin{align}\label{defsf}
F& =\Phi\,\omega \,,\qquad
d_A\Phi=0\,,
\end{align}
where $\Phi\in \Omega^0(M, \mathrm{End}\,E)$.  
When the dimension $d=2n\geq 4$, the second equation is implied by the first equation and the Bianchi identity.  When $d=2$, the first equation is trivial and $\Phi$ being covariantly constant becomes the sole condition. 

Let us recall two properties of symplectically flat connections pointed out in \cite{TZ}.  First, with respect to a compatible metric, a symplectically flat connection satisfies the Yang-Mills equations \cite{TZ}*{Example 3.3}. (In this paper, we will always assume that any Riemannian metric we consider on a symplectic manifold is compatible to the symplectic structure.) 
In fact, in dimension $d=2$, the notion of Yang-Mills connections with respect to a compatible metric is equivalent to symplectically flat connections.  This is significant since Atiyah and Bott in a seminal work \cite{Atiyah-Bott} gave the classification of Yang-Mills bundles over Riemann surfaces.  Their result thus immediately provides us with the classification of the symplectically flat bundles over closed, oriented two-manifolds.  

In dimensions four and higher, it should be evident from \eqref{defsf} that the symplectic flatness condition is a much stronger condition than the Yang-Mills condition.  Perhaps a better comparison for symplectic flatness in this case is that of the standard flat connection condition, $F=0$.  This relates to a second property of symplectically flatness described in \cite{TZ} that we now recall.  If the symplectic structure is integral class, i.e. $\om \in H^2(M, \mathbb{Z})$, then we can build what is called a prequantum circle bundle $X$, i.e. $S^1 \rightarrow X \rightarrow M$, which by definition has Euler class given by $\om$.  Of interest, a symplectically flat connection $A$ over $M$ satisfying \eqref{defsf} can be lifted to a flat $A-\theta\Phi$ connection over $X$, where $\theta$ is a global angular form of $X$ \cite{TZ}*{Corollary 3.9}.   And as is well-known, isomorphism classes of flat bundles over any manifold is classified by the $G$-character variety of $\pi_1$, i.e. the conjugacy classes of homomorphisms from the fundamental group of the base space to the structure group $G$ of the fiber.    

Together, we have for dimension $d=2$ a classification of symplectically flat bundles from Atiyah-Bott, and for $d\geq 4$, we have in the integral $\om$ case a suggestion of a possible classification related to flat bundles over a space of one higher dimension.  A natural question then is to ask whether we can obtain a classification of bundles with a symplectically flat connection in dimensions $d=2n>2$ over any symplectic manifold $(M^{2n}, \om)$, which generalizes the classification of Atiyah-Bott's in dimension two.

Indeed, we are able to obtain a classification of symplectically flat bundles that holds for all dimensions.  Our result is actually a corollary of a general statement, that classifies principal bundles with curvature proportional to any closed two-form.  For this, it is useful to define a broader notion of flatness.   
\begin{defn}\label{zfdef}
For a smooth manifold $M$, let $\zeta\in\Omega^2(M)$ be a closed 2-form on $M$.  We say a fiber bundle $E$ over $M$ with covariant derivative $d_A$, where locally  $d_A=d+A$ with $A$ being the connection 1-form on $E$, is {\bf $\zeta$-flat} if the curvature $F$ satisfies  
\begin{align}\label{zflat}
F&=\Phi\, \zeta\,,\qquad 
d_A\Phi =0\,.
\end{align}
We shall call such a connection $A$ with curvature satisfying \eqref{zflat} a {\bf $\zeta$-flat connection}. 
\end{defn}
This definition does not require $\zeta$ to be a symplectic form and is also applicable for bundles over odd dimensional base manifolds.  For this general $\zeta$-flat condition, we prove in Section 2 the following:
\begin{thm}\label{Thrm1}
Let $M$ be a connected manifold, $\zeta\in\Omega^2(M)$ be a closed 2-form, and $G$ be a Lie group. There exists a bijective correspondence between the following sets:
\begin{align*}
\left\{\begin{matrix}\text{isomorphism classes of G-bundles over M}\\ \text{with a $\zeta$-flat connection}\end{matrix}\right\}
\simeq
\left\{\begin{matrix}\text{conjugacy classes of}\\ \text{homomorphisms }
\rho: \Gamma\to G\end{matrix}\right\}.
\end{align*}
Here, $\Gamma$ is an $\mathbb{R}/\overbar{H}$-extension of $\pi_1(M)$, where $\overbar{H}\subset\mathbb{R}$ is the closure of a group $H$ defined as 
$$
H=\left\{ \int_D\zeta \ \big| \  D \text{ is a representative in } \pi_2(M) \right\}.
$$
\end{thm}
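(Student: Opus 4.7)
The plan is to establish the bijection via a holonomy-type correspondence: parallel transport defines the representation in one direction, and an explicit covering-space construction gives the inverse. First, I would realize $\Gamma$ concretely as equivalence classes of pairs $(\gamma, s)$, where $\gamma$ is a piecewise smooth loop based at a chosen $x_0 \in M$ and $s \in \mathbb{R}$, under the relation $(\gamma_1, s_1) \sim (\gamma_2, s_2)$ iff there is a homotopy $D$ from $\gamma_1$ to $\gamma_2$ rel basepoint with $s_1 - s_2 \equiv \int_D \zeta \pmod{\overbar H}$. Concatenation of loops combined with addition of the real parameters gives the group structure, and $(\gamma, s) \mapsto [\gamma]$ places $\Gamma$ in the short exact sequence
\begin{equation*}
1 \to \mathbb{R}/\overbar H \to \Gamma \to \pi_1(M, x_0) \to 1,
\end{equation*}
with kernel represented by constant loops. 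The use of $\overbar H$ rather than $H$ ensures the quotient is a Hausdorff Lie group, and any continuous $\rho$ must factor through it.

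\textbf{Forward map.} Given a $\zeta$-flat $G$-bundle $(E, A)$ with $F = \Phi \zeta$ and $d_A\Phi = 0$, fix a frame at $x_0$, which identifies $\Phi|_{x_0}$ with an element $\Phi_0 \in \lg$, and set
\begin{equation*}
\rho([\gamma, s]) := \mathrm{hol}_A(\gamma) \cdot \exp(-s \Phi_0).
\end{equation*}
The main verification is that this descends to equivalence classes, which amounts to showing
\begin{equation*}
\mathrm{hol}_A(\gamma_1) \cdot \mathrm{hol}_A(\gamma_2)^{-1} = \exp\!\Big(\Phi_0 \int_D \zeta\Big)
\end{equation*}
whenever $D$ is a homotopy from $\gamma_1$ to $\gamma_2$ rel basepoint. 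This is a non-abelian Stokes computation: since $d_A\Phi = 0$, the holonomy around any loop based at $x_0$ commutes with $\Phi_0$, so everything lives in the centralizer $Z_G(\Phi_0)$, in which $\Phi_0$ is central; the covariantly-constant $\Phi$ transported back to $x_0$ along any path equals $\Phi_0$, which collapses the surface-ordered $\exp(\int_D F)$ to the honest exponential $\exp(\Phi_0 \int_D \zeta)$. Multiplicativity of holonomy makes $\rho$ a homomorphism, and changing the frame at $x_0$ conjugates it by the corresponding element of $G$.

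\textbf{Inverse map.} Given a continuous $\rho:\Gamma \to G$, let $\Phi_0 \in \lg$ generate the one-parameter subgroup $\rho|_{\mathbb{R}/\overbar H}$. Define $\widetilde M_{\overbar\zeta}$ as the analogous space of equivalence classes of pairs (path from $x_0$, real number); this is a principal $(\mathbb{R}/\overbar H)$-bundle over the universal cover $\widetilde M$, carrying an action of $\Gamma$ covering the $\pi_1(M)$-action on $\widetilde M$. The pullback of $\zeta$ admits a primitive $\alpha$ on $\widetilde M_{\overbar\zeta}$ because its $\pi_2$-periods, lying in $\overbar H$, vanish after the quotient. The connection $A_\rho := \alpha \cdot \Phi_0$ on the trivial $G$-bundle $\widetilde M_{\overbar\zeta} \times G$ has curvature $\pi^*\zeta \cdot \Phi_0$ and is $\zeta$-flat; the lifted $\Gamma$-action $(p, g) \mapsto (\sigma \cdot p, \rho(\sigma) g)$ preserves $A_\rho$, and the quotient is a $\zeta$-flat $G$-bundle on $M$. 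Tracing through the definitions shows the two constructions are mutually inverse.

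\textbf{Main obstacle.} The heart of the argument is the non-abelian Stokes identity in the forward map, reducing a surface-ordered exponential to $\exp(\Phi_0 \int_D \zeta)$. In the abelian case this is immediate from Stokes, but the non-abelian case crucially exploits that all relevant group elements lie in the abelian one-parameter subgroup generated by $\Phi_0$, a consequence of $d_A \Phi = 0$. The remaining bookkeeping — verifying $\Gamma$-equivariance of $A_\rho$ in the inverse construction, and checking that the two maps are mutual inverses at the level of isomorphism and conjugacy classes — is routine but must be carried out carefully.
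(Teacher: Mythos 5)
Your overall strategy is essentially the paper's: both directions of the correspondence are mediated by holonomy, and the whole weight rests on the identity $\mathrm{hol}(\gamma_2)^{-1}\mathrm{hol}(\gamma_1)=\exp\!\big(\Phi_0\int_D\zeta\big)$ for a homotopy $D$ rel basepoint, which is exactly the paper's key holonomy lemma. You prove it by a non-abelian Stokes argument exploiting $d_A\Phi=0$, whereas the paper passes to the holonomy subbundle, uses Ambrose--Singer to see its structure group is one-dimensional abelian, and applies ordinary Stokes there; both routes are sound. Your uniform use of $\overbar{H}$ also absorbs the paper's three-case discussion (in particular the dense-$H$ case), but note two points you leave unverified: well-definedness of $\rho$ on the $\mathbb{R}/\overbar{H}$ factor needs $\exp(h\Phi_0)=e$ for $h\in H$ (apply your Stokes identity to a sphere viewed as a self-homotopy of the constant loop, then use continuity to pass to $\overbar{H}$; when $\overbar{H}=\mathbb{R}$ this forces $\Phi_0=0$, recovering the paper's Case 3), and the later equivariance arguments need $\mathrm{Ad}_{\rho(\sigma)}\Phi_0=\Phi_0$ for all $\sigma\in\Gamma$, which follows from centrality of the $\mathbb{R}/\overbar{H}$ subgroup (thin homotopies) but should be said.

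The one genuine gap is in the inverse construction. Knowing that $\pi^*\zeta$ admits a primitive $\alpha$ on $\widetilde{M}_{\overbar{\zeta}}$ (true, since the $\pi_2$-periods die on the total space) is not enough: for the quotient to carry a connection you need the $\Gamma$-action to preserve $A_\rho=\alpha\cdot\Phi_0$, and a short computation shows, given $\mathrm{Ad}_{\rho(\sigma)}\Phi_0=\Phi_0$, that this requires $\sigma^*\alpha=\alpha$ for every $\sigma\in\Gamma$. A primitive is only determined up to closed forms and has no reason to be $\Gamma$-invariant, so the assertion that ``the lifted $\Gamma$-action preserves $A_\rho$'' does not follow from what you established and fails for a generic choice of $\alpha$ whenever $\Phi_0\neq0$. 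The repair is to take the tautological connection $1$-form of your path-space model of $\widetilde{M}_{\overbar{\zeta}}$ (horizontal directions given by moving the endpoint of the path at fixed $s$), which satisfies $d\alpha=\pi^*\zeta$ and is $\Gamma$-invariant because $\Gamma$ acts by pre-composition at the basepoint end; this is precisely how the paper sidesteps the issue, defining the connection on $\Psi^a(M)\times_\rho G$ directly by horizontal lifts rather than via a primitive. With that choice made explicit, and with the ``mutually inverse'' check actually carried out (this is where the paper's injectivity and surjectivity lemmas do real work, the latter again resting on the holonomy lemma), your outline goes through.
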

When $\zeta=\om$ is the symplectic structure, the above theorem gives a classification of symplectically flat bundles.  In $d=2$, this reproduces exactly the classification of Yang-Mills bundles over Riemann surfaces of Atiyah-Bott.  In the case when $\overbar{H}=\mathbb{R}$, then $\Ga=\pi_1(M)$ and the classification reduces to exactly that for flat connections.  And just like the classification for flat connections, Theorem \ref{Thrm1} holds for base manifolds of even or odd dimensions. 

That symplectically flat connections are Yang-Mills connections and also related to flat connections make them rather special.  Recall that flat connections have a special role in Yang-Mills theory, which is described by the functional
\begin{align}
S_{Y\!M}(A) = \|F(A)\|^2=\int_M \tr~ F \w * F \,.
\end{align}
Flat connections have the distinguished role that they correspond to the zeroes of the Yang-Mills functional.  From this perspective, we ask in Section 3 whether there are functionals of connection 1-forms whose zeroes are exactly symplectically-flat connections, or more generally $\zeta$-flat connections.  For this, we find two natural functionals.

\begin{defn}\label{dpYM}
The \textbf{primitive Yang-Mills} functional is defined on a connection $A$ as
\begin{align*}
S_{pY\!M}(A)= \|F_p(A)\|^2= \int_M \tr~ F_p \w * F_p 
\end{align*}
where under the Lefschetz decomposition of the curvature 2-form, $F= F_p + \Phi\, \om$, $F_p$ is the primitive component of $F$. We call the critical points of $S_{pY\!M}$ \textbf{primitive Yang-Mills connections}.
\end{defn}

For the second functional, it can be easily motivated starting from the sum of the norm-squared of the two conditions of symplectic flatness \eqref{defsf}: $\|F(A)-\omega\,\Phi\|^2+\|d_A\Phi\|^2$.  If we let $\Phi$ to be independent of $A$, or explicitly, replacing $\Phi$ with a general endomorphism on the bundle denoted by $B$, this leads us to the following functional.  

\begin{defn}\label{def of cYM}
The \textbf{cone Yang-Mills} functional is defined on a connection $A$ and an endomorphism $B$ on the bundle as 
\begin{align*}
S_{cY\!M}(A,B)= \|F(A)+\om B\|^2 + \|d_A B\|^2= \int_M \tr \left[(F + \om B) \w * (F + \om B) + d_A B \w * d_A B\right]
\end{align*}
\end{defn}
Clearly, the zeroes of both functionals satisfy \eqref{defsf} and correspond to symplectically flat connections.  But they interestingly have different systems of equations for the critical points with solutions generally different from the Yang-Mills connections.  These functionals can also be modified so that the zeroes are $\zeta$-flat instead.

It is worthwhile to point out that the second functional also has an interpretation as the normed square of the cone curvature.  The cone algebra of interest here consists of elements $\mathcal{C}^*(M):=\Om^*(M)[\theta]=\{ \xi+\theta\eta\, |\, \xi,\eta\in\Omega^*(M) \}$ with $d\theta=\om$ (see \cite{TT} for a discussion of this cone algebra in the context of symplectic geometry).  This algebra is quasi-isomorphic to the algebra of differential forms of the prequantum circle bundle $X$, $\Om^*(X)$, when $\om$ is integral class.  But importantly, $\mathcal{C}^*(M)$ is well-defined even when $\om$ is not integral.  As discussed in \cite{TZ}, we can define a covariant derivative on the twisted $\mathcal{C}^*(M,E)$ given by $D_C = d_A + \theta B$, with the cone curvature $D_{\mathcal{C}}^2= \left(F + \om\, B\right) - \theta \left(d_A B\right)$.  As noted in \cite{TZ}*{Proposition 3.8}, the cone curvature is flat, i.e. $D_{\mathcal{C}}^2=0$, if and only if $A$ is a symplectically flat connection.  This justifies calling $S_{cYM}$ the cone Yang-Mills functional.

The equivalences between symplectically flat and cone flat, and also their lift to flat connections on the prequantum circle bundle when $\om$ is integral do provide us with further insights.  Though flat connections have zero curvature, there are known characteristic classes associated to flat bundles.  (See for example \cite{Morita}*{Chapter 2.3} for an overview.)  This led us to consider characteristic classes of flat bundles over $X$ but with the covariant derivative taken to be that of the cone.  Specifically, using $D_C = d_A - \theta \Phi$ with $F=\Phi \, \om$, we are able to obtain characteristic classes for the symplectically flat bundles.  Moreover, the Chern-Simons type functionals on $X$ also lead to novel functionals for symplectically flat connections.

This paper is organized as follows. In Section 2, we give the proof of Theorem \ref{Thrm1}, the  classification theorem of $\zeta$-flat bundles.  In Section 3, we introduce the primitive Yang-Mills and cone Yang-Mills functionals and describe some of their properties.  We provide explicit examples of connections that are either Yang-Mills or primitive Yang-Mills, but not both.  We also write down functionals whose zeroes are more generally $\zeta$-flat.  We conclude in Section 4 with a brief discussion on the gradient flow of the primitive Yang-Mills functional and some remarks concerning the characteristic forms and classes for symplectically flat and $\zeta$-flat bundles. 

\

\noindent{\it Acknowledgements.~} 
We would like to thank David Clausen, Si Li, Jianfeng Lin, Xiang Tang, Jiaping Wang,  Shing-Tung Yau and Xiangwen Zhang for helpful discussions.  We would like to acknowledge the support of the Simons Collaboration Grant No.~636284 for the first author and the support of the National Key Research and Development Program of China No.~2020YFA0713000 for the second author.

\section{Classification of  $\zeta$-flat bundles}

The definition of symplectically flat connection is given in \eqref{defsf}. A natural question to ask is how symplectically flat bundles can be classified.  As we have seen, when the base is two-dimensional, symplectically flat bundles are Yang-Mills bundles, which are classified by Atiyah and Bott \cite{Atiyah-Bott}. Hence, we seek a classification when the base space is of dimensions greater than two.  In this case, we find a  classification of $\zeta$-flat bundles (see Definition \ref{zfdef}) that generalizes Atiyah-Bott's result for Yang-Mills bundles over Riemann surfaces. Our proof will be similar to that given by Morrison \cite{Morrison} for Yang-Mills bundles in the two-dimensional case.

\begin{thm}
Let $M$ be a  connected manifold, $\zeta\in\Omega^2(M)$ be a closed 2-form, and $G$ be a Lie group. There exists a bijective correspondence between the following sets:
\begin{align*}
\left\{\begin{matrix}\text{isomorphism classes of G-bundles over M}\\ \text{with a $\zeta$-flat connection}\end{matrix}\right\}
\simeq
\left\{\begin{matrix}\text{conjugacy classes of}\\ \text{homomorphisms }
\rho: \Gamma\to G\end{matrix}\right\}.
\end{align*}
Here, $\Gamma$ is an $\mathbb{R}/\overbar{H}$-extension of $\pi_1(M)$, where $\overbar{H}\subset\mathbb{R}$ is the closure of a group $H$ defined as 
$$
H=\left\{ \int_D\zeta \ \big| \  D \text{ is a representative in } \pi_2(M) \right\}.
$$
\end{thm}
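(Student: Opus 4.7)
The plan is to adapt Morrison's proof of the classification of Yang-Mills bundles over Riemann surfaces, now in arbitrary dimension and for the $\zeta$-flat condition. The key observation driving the argument is that for a $\zeta$-flat connection, $\Phi$ is covariantly constant, so its basepoint value $\Phi_0 := \Phi(x_0)$ commutes with every holonomy of $A$, and $F = \Phi\zeta$ lies in the abelian line $\mathbb{R}\Phi_0$ in any parallel frame. This effectively reduces the relevant non-abelian holonomy computations to abelian ones.

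First I would construct $\Gamma$ explicitly. Let $\Gamma$ be the set of equivalence classes of pairs $(\gamma,t)$, where $\gamma$ is a based loop at $x_0\in M$ and $t\in\mathbb{R}$, modulo $(\gamma_1,t_1)\sim(\gamma_2,t_2)$ whenever there is a based homotopy $H:\gamma_1\simeq\gamma_2$ with $t_1 - t_2 \equiv \int_H \zeta \pmod{\overbar{H}}$, and declare $(\gamma_1,t_1)\cdot(\gamma_2,t_2)=(\gamma_1\gamma_2,\,t_1+t_2)$. The projection $(\gamma,t)\mapsto[\gamma]$ is a surjection $\Gamma\to\pi_1(M)$ whose kernel consists of constant loops with varying $t$, modulo null-homotopies of the constant loop, i.e.\ modulo the sphere-period group $H$; taking the closure yields $\mathbb{R}/\overbar{H}$.

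For the forward direction (bundle to $\rho$), pick a frame at $x_0$ identifying $E_{x_0}$ with $G$, and set
\[
\rho(\gamma,t) := \mathrm{Hol}_\gamma(A)\cdot\exp(-t\,\Phi_0).
\]
Well-definedness rests on three checks: (i) $\Phi_0$ commutes with every holonomy, since $d_A\Phi=0$ makes $\Phi$ parallel; (ii) for any based homotopy $H:\gamma_1\simeq\gamma_2$ one has $\mathrm{Hol}_{\gamma_1}\mathrm{Hol}_{\gamma_2}^{-1} = \exp\!\bigl(\Phi_0\int_H\zeta\bigr)$, by a non-abelian Stokes argument that collapses to the abelian formula because $F=\Phi\zeta$ is a scalar multiple of $\Phi_0$ in any parallel frame; (iii) for every $S\in\pi_2(M)$, the quantization $\exp\!\bigl((\int_S\zeta)\Phi_0\bigr) = e$ holds, because restricting $(E,A)$ to $S^2$ produces a bundle whose topological invariant is precisely this group element, which must be the identity for the bundle to close up. Continuity of $\rho$ along the $\mathbb{R}$-direction promotes the quantization from $H$ to $\overbar{H}$. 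A different frame at $x_0$ conjugates $\rho$, so we obtain a well-defined map at the level of conjugacy classes.

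For the reverse direction, given $\rho:\Gamma\to G$, write $\rho|_{\mathbb{R}/\overbar{H}}(t) = \exp(-tX)$ for the unique $X\in\mathfrak{g}$ with $\exp(hX)=e$ for $h\in\overbar{H}$. Form the $\Gamma$-cover $\hat{M}\to M$ built from equivalence classes of pairs $(\gamma,t)$ with $\gamma$ a path starting at $x_0$, equipped with the tautological 1-form $\theta$ satisfying $d\theta = \pi^*\zeta$. Set $E=(\hat{M}\times G)/\Gamma$ with $\Gamma$ acting on $G$ via $\rho$, equipped with the connection descended from $\theta\,X$; a direct calculation gives $F=\Phi\zeta$ with $\Phi$ parallel, $\Phi(x_0)=X$, and $d_A\Phi=0$. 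Verifying the two constructions are mutually inverse is then formal. I expect the hardest step to be (ii) above---the non-abelian Stokes identity---since a path-ordered exponential over a 2-parameter family ordinarily carries Magnus-type corrections, but those corrections collapse here because $\Phi_0$ is central in the reduced holonomy and the curvature sits in $\mathbb{R}\Phi_0$. The secondary subtlety is (iii), where passing from $H$ to its closure $\overbar{H}$ is what forces the topological closure to appear in the statement.
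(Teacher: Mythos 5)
Your outline is correct and follows the same Morrison-style route as the paper: an extension $\Gamma$ of $\pi_1(M)$ built from based loops with a period coordinate, the forward map given by holonomy, and the inverse given by an associated-bundle construction on a path-space cover carrying a tautological connection whose curvature is $\zeta$ (your ``direct calculation'' for $F=\Phi\zeta$ is exactly the paper's Step 2). The genuine differences are two. First, your step (ii) is the paper's holonomy lemma (Lemma \ref{holonomy}), but you prove it by conjugating the curvature into a parallel frame so that non-abelian Stokes collapses to the abelian formula, whereas the paper restricts to the holonomy subbundle via Ambrose--Singer, where the structure group is one-dimensional abelian, and integrates an ODE; both are sound, and the Ambrose--Singer reduction avoids surface-ordered exponentials. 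Second, you treat all period groups uniformly by building $\overbar{H}$ into $\Gamma$ and promoting $\exp(h\Phi_0)=e$ from $H$ to $\overbar{H}$ by continuity; the paper splits into the Lie-group cases and the dense case, showing in the latter that a $\zeta$-flat connection is flat --- substantively the same point ($\overbar{H}=\mathbb{R}$ forces $\Phi_0=0$), but your version is tidier. Minor cautions: your justification of (iii) via the restricted bundle's ``topological invariant'' is vague, though harmless, since (iii) is the special case of (ii) where a sphere is a based homotopy from the constant loop to itself; writing $\rho|_{\mathbb{R}/\overbar{H}}(t)=\exp(-tX)$ presumes continuity of $\rho$ there, the same implicit assumption the paper makes; and ``mutually inverse is formal'' quietly subsumes the paper's Steps 3--5 (conjugation invariance, injectivity via constancy of the intertwiner along horizontal lifts, and the explicit isomorphism $P_\rho\cong P$), which are easy but not contentless.
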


To give the exact definition of $\Gamma$, we use the following conventions.

Let $\Psi(M)$ be the path space defined as follows. It consists of equivalence classes of closed, piecewise smooth paths on $M$. If we reparametrize a path $\alpha_1:[0,1]\to M$ by choosing a different map $\alpha_2$ with the same image and orientation, then $\alpha_1$ and $\alpha_2$ are equivalent in $\Psi(M)$. In other words, $\alpha_1$ and $\alpha_2$ are equivalent if there exists a piecewise smooth increasing function $\phi:[0,1]\to [0,1]$ with $\phi(0)=0$, $\phi(1)=1$ and $\alpha_2=\alpha_1\circ\phi$.

We can define a multiplication in $\Psi(M)$ by the composition of paths. For two paths $\alpha_1,\alpha_2:[0,1]\to M$ with $\alpha_1(1)=\alpha_2(0)$, set
$$
(\alpha_2\alpha_1)(t)=
\begin{cases}
\alpha_1(2t), & t\in [0,\frac{1}{2}],\\
\alpha_2(2t-1), & t \in [\frac{1}{2},1].
\end{cases}
$$

Fix a base point $a\in M$. Let $\Psi^a_a(M)$ be the semigroup in $\Psi(M)$ that consists of classes of loops with base point $a$, $\Psi^a_{a,0}(M)$ be the subsemigroup of $\Psi^a_a(M)$ that consists of classes of contractible loops, and $\Psi^a_{a,\zeta}(M)$ be the subsemigroup of $\Psi^a_{a,0}(M)$ that consists of classes of loops which are the boundary of some disk (contractible 2-chain) $D$ in $M$ satisfying $\int_D\zeta=0$. Then $\Psi^a_a(M)/\Psi^a_{a,0}(M)$ and $\Psi^a_{a,0}(M)/\Psi^a_{a,\zeta}(M)$ have group structures, where the identity element is the equivalence class of the constant path at $a$, and the inverse is reversing the orientation:
$$
\alpha^{-1}(t)=\alpha(1-t).
$$

Observe that $\Psi^a_a(M)/\Psi^a_{a,0}(M)=\pi_1(M)$, and $\Psi^a_{a,0}(M)/\Psi^a_{a,\zeta}(M)$ is equivalent to a quotient group $\mathbb{R}/H$ by the identification $\gamma\mapsto[\int_D\zeta]$, where $H=\{ \int_D\zeta| D \text{ is a representative in } \pi_2(M) \}$.

When $\mathbb{R}/H$ is a Lie group, we set $\Gamma=\Psi^a_a(M)/\Psi^a_{a,\zeta}(M)$. When $\mathbb{R}/H$ is not a Lie group, we set $\Gamma=\pi_1(M)$.

Explicitly, there are three possibilities for $H$. Let $H^+$ be the subset of $H$ with positive numbers.

\textbf{Case 1.}
When $H^+$ is empty, $H=0$ and $\mathbb{R}/H=\mathbb{R}$. In this case $\Gamma=\Psi^a_a(M)/\Psi^a_{a,\zeta}(M)$ is an $\mathbb{R}$-extension of $\pi_1(M)$. For Riemann surfaces, this occurs when the genus $g\geq 1$.

\textbf{Case 2.}
When $H^+$ has a minimal number, $H\simeq\mathbb{Z}$ and $\mathbb{R}/H \simeq S^1$. In this case $\Gamma=\Psi^a_a(M)/\Psi^a_{a,\zeta}(M)$ is an $S^1$-extension of $\pi_1(M)$. For Riemann surfaces, this occurs when the genus $g=0$.

\textbf{Case 3.}
When $H^+$ is non-empty and has no minimal number, $H$ is dense in $\mathbb{R}$ and $\mathbb{R}/H$ is not a Lie group. In this case, $\Gamma=\pi_1(M)$. This case is impossible when $c\,\zeta$ is an integral cohomology class for some number $c\neq0$.

In the remainder of this subsection, we prove the classification theorem. 

\textbf{Case 1 and 2, $\mathbb{R}/H=\mathbb{R}$ or $S^1$.}

We first consider the case that $\mathbb{R}/H$ is a Lie group. By the discussion above, $\Psi^a_a(M)/\Psi^a_{a,\zeta}(M)$ is either an $\mathbb{R}$ or an $S^1$ extension of $\pi_1(M)$. 

\noindent\textbf{Step 1.} Given a morphism $\rho:\Psi^a_a(M)/\Psi^a_{a,\zeta}(M)\to G$, construct a corresponding principal $G$-bundle $P_{\rho}$ with a connection $A_{\rho}$.

Let $\Psi^a(M)$ be the space of classes of paths starting at $a$ on $M$. Two paths $\delta_1,\delta_2$ are identified if $\delta_1(1)=\delta_2(1)$ and $\delta_2^{-1}\delta_1\in\Psi^a_{a,\zeta(M)}$. This $\Psi^a(M)$ is a principal bundle over $M$. Its structure group is $\Psi^a_a(M)/\Psi^a_{a,\zeta}(M)$, and the projection is $\tau:\Psi^a(M)\to M,\delta\mapsto\delta(1)$. An element $\gamma\in\Psi^a_a(M)/\Psi^a_{a,\zeta}(M)$ acts on $\delta\in \Psi^a(M)$ on the right by $\delta\mapsto\delta\gamma$. 

Let $P_\rho=\Psi^a(M) \times_\rho G$ be the associated $G$-bundle by identifying $(\delta,g)$ and $\left(\delta\gamma,\rho(\gamma^{-1})g\right)$ in $\Psi^a(M)\times G$ for each $\gamma\in\Psi^a_a(M)/\Psi^a_{a,\zeta}(M)$. $G$ acts on $P_\rho$ by $[\delta,g]h=[\delta,gh]$. To define a connection $A_\rho$ on $P$, we can construct horizontal lifts for any path $\alpha$ on $M$ as follows. For arbitrary $[\delta,g]\in (P_\rho)_{\alpha(0)}$ with $\delta(1)=\alpha(0)$, we define the horizontal lift of $\alpha$ starting from $[\delta,g]\in P_\rho$ by $\tilde{\alpha}(t)=[\alpha_t\delta,g]$. Here, $\alpha_t$ is part of $\alpha$ starting at $\alpha(0)$ and ending at $\alpha(t)$. i.e. $\alpha_t:[0,1]\to M,s\mapsto\alpha(st)$.

We verify that this connection is well defined. Let $\alpha$ and $\beta$ be two paths on $M$ with $\alpha'(0)=\beta'(0)$. By definition, we have $\tilde{\alpha}'(0)=\tilde{\beta}'(0)$. Thus, the horizontal lifts do induce a distribution for horizontal vectors. Also, suppose $\alpha$ is an arbitrary path on $M$. For $[\delta_{(1)},g_1],[\delta_{(2)},g_2]\in (P_\rho)_{\alpha(0)}$, there exists an $h\in G$ such that $[\delta_{(1)},g_1]=[\delta_{(2)},g_2]h$. Let $\tilde{\alpha}_{(1)}$ and $\tilde{\alpha}_{(2)}$ be the horizontal lifts of $\alpha$ from $[\delta_{(1)},g_1]$ and $[\delta_{(2)},g_2]$, respectively. By assumption, we have $[\delta_{(1)},g_1]=\left[\delta_{(2)}\rho(g_1h^{-1}g_2^{-1}),g_1\right]$, which implies ${\delta_{(1)}}^{-1}\delta_{(2)}\rho\left(g_1h^{-1}g_2^{-1}\right) \in \Psi^a_{a,\zeta}(M)$. Hence, $\big(\alpha_t\delta_{(1)}\big)^{-1}\alpha_t\delta_{(2)}\rho\left(g_1h^{-1}g_2^{-1}\right) \in \Psi^a_{a,\zeta}(M)$ for all $t\in [0,1]$. So the lifts satisfy
$$
\tilde{\alpha}_{(1)}(t)=[\alpha_t\delta_{(1)},g_1]=[\alpha_t\delta_{(2)},g_2]h=\tilde{\alpha}_{(2)}(t)h.
$$
This proves that the horizontal lift does not depend on the choice of representative of $[\delta,g]$, and is equivariant along the fiber.

\noindent\textbf{Step 2.} Verify that the connection $A_{\rho}$ is $\zeta$-flat.

\begin{lem}
The connection $A_\rho$ constructed above is $\zeta$-flat.
\end{lem}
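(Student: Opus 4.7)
The plan is to compute the holonomy of $A_\rho$ around small contractible loops in $M$ and read off the curvature. The key observation is that the path‐space construction makes holonomy essentially tautological: the holonomy is literally given by $\rho$ applied to the loop's class in $\Gamma$. Since contractible loops with zero $\zeta$-area are killed in $\Gamma$, only the $\mathbb{R}/\overbar{H}$-piece of $\Gamma$ contributes to the holonomy around contractible loops, and that piece records precisely $\int_D \zeta$.

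More concretely, I would first establish the following holonomy formula. Fix $\delta\in\Psi^a(M)$ with $\delta(1)=x$, and let $\alpha$ be a loop at $x$. Using the composition identity $\alpha\delta \sim \delta\cdot(\delta^{-1}\alpha\delta)$ (viewed in $\Psi^a(M)$) together with the equivalence relation $[\delta\eta,g]=[\delta,\rho(\eta)g]$ defining $P_\rho$, I would show that the horizontal lift of $\alpha$ from $[\delta,g]$ ends at $[\delta,\rho(\eta)g]$, where $\eta:=\delta^{-1}\alpha\delta\in \Psi^a_a(M)/\Psi^a_{a,\zeta}(M)$. Thus holonomy around $\alpha$ is translation by $\rho(\eta)$. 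When $\alpha=\partial D$ for a small contractible disk $D$ based at $x$, the loop $\eta$ lies in $\Psi^a_{a,0}(M)/\Psi^a_{a,\zeta}(M)\cong \mathbb{R}/H$, and its class corresponds to $\int_D \zeta\bmod \overbar{H}$ (taking the image in $\mathbb{R}/\overbar{H}$).

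Next I would extract the curvature infinitesimally. Let $\iota:\mathbb{R}/\overbar{H}\hookrightarrow\Gamma$ be the central subgroup inclusion, so $\rho\circ\iota:\mathbb{R}/\overbar{H}\to G$ is a Lie group homomorphism; let $\Phi_0:=d(\rho\circ\iota)(1)\in\lg$. For a small parallelogram at $x$ spanned by $X,Y\in T_x M$, the enclosed $\zeta$-area is $\zeta_x(X,Y)$ to leading order, so the holonomy expands as $\exp\bigl(\Phi_0\,\zeta_x(X,Y)+o(\cdot)\bigr)$. Comparing with the standard holonomy/curvature expansion for a principal connection, this yields $F_x(X,Y)=\Phi_0\,\zeta_x(X,Y)$ in the trivialization determined by $\delta$.

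The last point is to promote $\Phi_0$ to a globally defined section $\Phi$ of $\mathrm{ad}(P_\rho)$ and verify $d_{A_\rho}\Phi=0$. Here one uses that $\mathbb{R}/\overbar{H}$ sits in the \emph{center} of $\Gamma$ by construction, so $\rho(\mathbb{R}/\overbar{H})$ commutes with all of $\rho(\Gamma)$, and hence $\Phi_0$ is fixed by $\mathrm{Ad}\circ\rho$. This invariance exactly says that $\Phi_0$ descends to a well-defined, trivialization‐independent section $\Phi$ of $\mathrm{ad}(P_\rho)$, constant in any path‐space trivialization; being constant in a gauge induced by horizontal lifts, it is automatically $d_{A_\rho}$-closed. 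Combined with $F=\Phi\,\zeta$, this gives $\zeta$-flatness.

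The step I expect to be the main obstacle is the first one: making rigorous the identification $\alpha\delta \sim \delta\cdot(\delta^{-1}\alpha\delta)$ as elements of $\Psi^a(M)$ modulo $\Psi^a_{a,\zeta}(M)$, and verifying that the resulting map $\eta\mapsto \int_D\zeta\bmod \overbar{H}$ agrees with the isomorphism $\Psi^a_{a,0}/\Psi^a_{a,\zeta}\cong \mathbb{R}/H$ set up earlier. Once the bookkeeping of paths is clean, the infinitesimal computation and the centrality argument are short.
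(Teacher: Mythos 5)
Your proposal follows essentially the same route as the paper's proof: both compute the holonomy of $A_\rho$ around small contractible loops via the tautological identity $[\gamma\delta,g]=[\delta(\delta^{-1}\gamma\delta),g]$, identify the class of the conjugated loop in $\Psi^a_{a,0}(M)/\Psi^a_{a,\zeta}(M)\cong\mathbb{R}/H$ with the enclosed area $\int_D\zeta$, and differentiate the holonomy with respect to that area to read off $F=\Phi\,\zeta$, with $\Phi$ coming from the derivative of $\rho$ restricted to the $\mathbb{R}/H$-subgroup. Your explicit check of $d_{A_\rho}\Phi=0$ via centrality of that subgroup in $\Gamma$ spells out a point the paper leaves implicit, but the argument is the same.
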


\begin{proof}
Let $p\in M$ be an arbitrary point and $v_1,v_2\in T_pM$ be arbitrary linearly independent vectors. Suppose $\zeta(v_1,v_2)=c$. We can find a local coordinate $\{x_1,\cdots,x_N\}$ such that $v_1=\frac{\partial}{\partial x_1}$, $v_2=\frac{\partial}{\partial x_2}$, and $\zeta=c\,dx_1\wedge dx_2+\bar{\zeta}$, where $\bar{\zeta}$ is generated by other $dx_i\wedge dx_j$ locally except for $dx_1\wedge dx_2$. Let $D_{(t)}$ be the parallelogram spanned by $\sqrt{t}v_1$ and $\sqrt{t}v_2$ in the local coordinate system, and $\gamma_{(t)}=\partial D_{(t)}$ be its boundary. Then we have $\int_{D_{(t)}} \zeta=t\,c$.

At arbitrary $[\delta,g]\in P_\rho$ on the fiber of $p$, let $v_1^H$ and $v_2^H$ be the horizontal lift of $v_1$ and $v_2$, respectively. Then the curvature
$$
F\left(v_1^H,v_2^H\right)=\frac{\partial}{\partial t}hol\left(\gamma_{(t)}\right).
$$
Here, $hol\left(\gamma_{(t)}\right)\in G$ denotes the holonomy along $\gamma_{(t)}$ at $[\delta,g]$.

On the other hand,
$$
[\delta,g]hol\left(\gamma_{(t)}\right)=[\gamma_{(t)}\delta,g]=[\delta(\delta^{-1}\gamma_{(t)}\delta),g]=\left[\delta,g\left(g^{-1}\rho\left(\delta^{-1}\gamma_{(t)}\delta\right)g\right)\right].
$$
This implies
$$
hol\left(\gamma_{(t)}\right)=g^{-1}\rho\left(\delta^{-1}\gamma_{(t)}\delta\right)g.
$$
Note that $\delta^{-1}\gamma_{(t)}\delta$ is contractible and its base point is $a$, i.e. $\delta^{-1}\gamma_{(t)}\delta \in \Psi^a_{a,0}(M)$. Since $\Psi^a_{a,0}(M)/\Psi^a_{a,\zeta}(M)$ is equivalent to $\mathbb{R}/H$, $\delta^{-1}\gamma_{(t)}\delta$ can be identified with $\int_{D_{(t)}}\zeta=tc$ in $\mathbb{R}/H$. Let $\xi$ be an element of the Lie algebra of $\Psi^a_a(M)/\Psi^a_{a,\zeta}(M)$ such that $\mathrm{exp}(t\xi)$ generates the subgroup $\Psi^a_{a,0}(M)/\Psi^a_{a,\zeta}(M)$ and $t$ can be identified with $\mathbb{R}/H$. This $\xi$ is independent of the choice of $v_1$ and $v_2$. Then we have
$$
F\left(v_1^H,v_2^H\right)=\frac{\partial}{\partial t}hol\left(\gamma_{(t)}\right)=c\mathrm{Ad}_{g^{-1}}\rho(\xi)=\zeta(v_1,v_2)\mathrm{Ad}_{g^{-1}}\rho(\xi).
$$
\end{proof}

\noindent\textbf{Step 3.} Show that the morphism $\rho\mapsto(P_{\rho},A_{\rho})$ is invariant under conjugation.

\begin{lem}
Suppose $\rho$ and $\bar{\rho}$ are conjugate homomorphisms from $\Psi^a_a(M)/\Psi^a_{a,\zeta}(M)$ to $G$. $(P_\rho,A_\rho)$ and $(P_{\bar{\rho}},A_{\bar{\rho}})$ are principal bundles with $\zeta$-flat connections constructed as above. Then $(P_\rho,A_\rho)$ and $(P_{\bar{\rho}},A_{\bar{\rho}})$ are equivalent.
\end{lem}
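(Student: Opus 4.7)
The plan is to produce an explicit bundle isomorphism $\varphi:P_\rho\to P_{\bar\rho}$ by ``left-translating'' in the fiber by the conjugating element, and then to verify that $\varphi$ intertwines the two horizontal distributions. Write the conjugacy hypothesis as $\bar\rho(\gamma)=h\,\rho(\gamma)\,h^{-1}$ for a fixed $h\in G$, and propose
\[
\varphi\bigl([\delta,g]_\rho\bigr)\;=\;[\delta,\,hg]_{\bar\rho}.
\]

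The first step is to check well-definedness. The equivalence relation defining $P_\rho$ identifies $(\delta,g)$ with $(\delta\gamma,\rho(\gamma^{-1})g)$, so one needs $[\delta\gamma,\,h\rho(\gamma^{-1})g]_{\bar\rho}=[\delta,\,hg]_{\bar\rho}$. Applying the corresponding equivalence in $P_{\bar\rho}$ rewrites the left side as $[\delta,\,\bar\rho(\gamma)h\rho(\gamma^{-1})g]_{\bar\rho}$, and equality with the right side reduces precisely to $\bar\rho(\gamma)h=h\rho(\gamma)$, which is the conjugation hypothesis. The map $[\delta,g]_{\bar\rho}\mapsto[\delta,h^{-1}g]_\rho$ is an obvious inverse, so $\varphi$ is a bijection.

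The second step is to confirm that $\varphi$ is a morphism of principal $G$-bundles: it covers the identity on $M$ because both bundles project by $[\delta,\cdot]\mapsto\delta(1)$, and it is $G$-equivariant because the $G$-action is right multiplication on the second factor while the conjugation is left multiplication by $h$.

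The third step, and the point of the lemma, is connection preservation. By construction in Step~1, the horizontal lift of a path $\alpha$ on $M$ starting at $[\delta,g]_\rho$ is $\tilde\alpha_\rho(t)=[\alpha_t\delta,g]_\rho$. Its image under $\varphi$ is $[\alpha_t\delta,\,hg]_{\bar\rho}$, which by the very same construction in $P_{\bar\rho}$ is the horizontal lift of $\alpha$ starting at $\varphi([\delta,g]_\rho)=[\delta,hg]_{\bar\rho}$. Hence $\varphi$ matches horizontal subspaces pointwise, so $\varphi^*A_{\bar\rho}=A_\rho$ and $(P_\rho,A_\rho)\cong(P_{\bar\rho},A_{\bar\rho})$.

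I do not anticipate a serious obstacle: the argument is a formal unwinding of the associated-bundle construction. The only place where care is required is the well-definedness computation, where one must track which side of $g$ the conjugating element $h$ appears on; the calculation works out precisely because the quotient uses $\rho(\gamma^{-1})$ acting on the left of $g$, so conjugation by $h$ on the right side of the slot where $\rho$ appears is exactly what is needed.
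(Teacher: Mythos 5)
Your proof is correct and follows essentially the same route as the paper: both define the bundle map by left-multiplying the fiber coordinate by the conjugating element, $[\delta,g]_\rho\mapsto[\delta,hg]_{\bar\rho}$, and then observe that horizontal lifts $[\alpha_t\delta,g]$ are carried to horizontal lifts, so the connections correspond. The only difference is cosmetic—you verify well-definedness on the quotient explicitly, whereas the paper phrases it as an automorphism of $\Psi^a(M)\times G$ descending to the associated bundles.
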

\begin{proof}
Suppose $\bar{\rho}=g_0\rho g_0^{-1}$, we define an automorphism on $\Psi^a(M) \times G$ by $(\delta,g)\mapsto(\delta,g_0g)$ and this automorphism induces an isomorphism $f:\Psi^a(M) \times_\rho G\to \Psi^a(M) \times_{\bar\rho} G$. For arbitrary path $\alpha$ on $M$ and $[\delta,g]_\rho\in \Psi^a(M)$ such that $\delta(1)=\alpha(0)$, the horizontal lift of $\alpha$ at $[\delta,g]_\rho$ is defined as $\tilde{\alpha}_\rho(t)=[\alpha_t\delta,g]_\rho$. On the other hand, the horizontal lift of $\alpha$ at $f\big( [\delta,g]_\rho \big)=[\delta,g_0g]_{\bar{\rho}}$ is defined as $\tilde{\alpha}_{\bar{\rho}}(t)=[\alpha_t\delta,g_0g]_{\bar{\rho}}$. So we have $\tilde{\alpha}_{\bar{\rho}}(t)=f\circ \tilde{\alpha}_\rho(t)$, which implies $f_*$ sends horizontal vectors to horizontal vectors. Thus, $f^*A_{\bar\rho}=A_\rho$.
\end{proof}

By this lemma, given any conjugacy classes $[\rho]$ of the morphisms from $\Psi^a_a(M)/\Psi^a_{a,\zeta}(M)$ to $G$, there is a corresponding $G$-bundle $P_{\rho}$ with a $\zeta$-flat connection $A_{\rho}$. It remains to show that this correspondence is bijective.

\noindent\textbf{Step 4.} The morphism $[\rho]\mapsto(P_{\rho},A_{\rho})$ is injective.

\begin{lem}
Let $\rho,\bar{\rho}:\Psi^a_a(M)/\Psi^a_{a,\zeta}(M)\to G$. $\rho$ and $\bar{\rho}$ are conjugate when $(P_\rho,A_\rho)$ and $(P_{\bar{\rho}},A_{\bar{\rho}})$ are equivalent.
\end{lem}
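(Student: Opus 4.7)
The plan is to recover the conjugating element $g_0 \in G$ directly from the bundle equivalence $f:(P_\rho, A_\rho) \to (P_{\bar{\rho}}, A_{\bar{\rho}})$ by tracking how $f$ acts on a canonical base point in the fiber over $a$, and then confirm $\bar{\rho} = g_0 \rho g_0^{-1}$ by comparing holonomies in the two bundles. Let $c_a$ denote the constant path at $a$, so that $p_0 := [c_a, e]_\rho$ is a distinguished point of $(P_\rho)_a$. Since $f$ covers the identity on $M$ and is $G$-equivariant, I can write $f(p_0) = [c_a, g_0]_{\bar{\rho}}$ for a unique $g_0 \in G$; the uniqueness uses the fact that the only $\gamma \in \Gamma$ with $c_a\gamma \sim c_a$ in $\Psi^a(M)$ is the identity class, since this requires $\gamma \in \Psi^a_{a,\zeta}(M)$.

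Next, I would compute the holonomy of $A_\rho$ along an arbitrary loop $\alpha \in \Psi^a_a(M)$ starting from $p_0$. From the explicit horizontal lift $\tilde\alpha(t) = [\alpha_t, e]_\rho$ of Step 1 (with $\delta = c_a$), the endpoint at $t=1$ is
\[
[\alpha, e]_\rho = [c_a \cdot \alpha, e]_\rho = [c_a, \rho([\alpha])^{-1}]_\rho = p_0 \cdot \rho([\alpha])^{-1},
\]
where the middle equality applies the defining equivalence of $P_\rho$ with $\gamma = [\alpha] \in \Gamma$. Hence the holonomy of $A_\rho$ at $p_0$ along $\alpha$ equals $\rho([\alpha])^{-1}$. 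Running the identical computation in $P_{\bar{\rho}}$ from $f(p_0) = [c_a, g_0]_{\bar{\rho}}$ gives endpoint $[c_a, \bar{\rho}([\alpha])^{-1} g_0]_{\bar{\rho}}$, which rewrites as $f(p_0) \cdot \bigl(g_0^{-1} \bar{\rho}([\alpha])^{-1} g_0\bigr)$; so the corresponding holonomy is $g_0^{-1} \bar{\rho}([\alpha])^{-1} g_0$.

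Finally, because $f$ preserves the horizontal distribution and is $G$-equivariant, it carries horizontal lifts in $P_\rho$ (with prescribed starting point) to horizontal lifts in $P_{\bar{\rho}}$, so the two holonomies above must coincide in $G$. This yields
\[
g_0^{-1} \bar{\rho}([\alpha])^{-1} g_0 = \rho([\alpha])^{-1},
\]
i.e., $\bar{\rho}([\alpha]) = g_0 \rho([\alpha]) g_0^{-1}$ for every $[\alpha] \in \Gamma = \Psi^a_a(M)/\Psi^a_{a,\zeta}(M)$, as desired.

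The main thing to be careful about is not the formal holonomy calculation but ensuring that the holonomy pairing faithfully detects all of $\Gamma$, including the central $\mathbb{R}/\overbar H$ factor arising from contractible loops with prescribed $\zeta$-period. That this works is already built into Step 2: the curvature formula shows that $\rho$ restricted to the central subgroup is captured by the infinitesimal holonomy of small disks bounding small contractible loops, so comparing holonomies along all loops based at $a$ recovers $\rho$ on every element of $\Gamma$. Hence the argument treats Cases 1 and 2 simultaneously and produces the required conjugacy.
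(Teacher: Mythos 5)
Your argument is correct and essentially the paper's own: both proofs hinge on the fact that the isomorphism $f$ carries the explicitly constructed horizontal lifts of $P_\rho$ to those of $P_{\bar{\rho}}$ — the paper packages this by showing that the function $h(\delta)$ defined by $f\big([\delta,e]_\rho\big)=[\delta,e]_{\bar{\rho}}h(\delta)$ is constant, while you specialize to the base fiber and compare holonomies of loops at $[c_a,e]_\rho$, which suffices since every element of $\Gamma$ is by definition a class of loops at $a$ (so your cautionary final paragraph is not needed). One small slip: with the identification $(\delta,g)\sim\big(\delta\gamma,\rho(\gamma^{-1})g\big)$ one gets $[c_a\alpha,e]_\rho=[c_a,\rho([\alpha])]_\rho$ rather than $[c_a,\rho([\alpha])^{-1}]_\rho$, but since you apply the same convention in $P_{\bar{\rho}}$ the discrepancy cancels and the conclusion $\bar{\rho}=g_0\,\rho\,g_0^{-1}$ is unaffected.
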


\begin{proof}
Suppose $f:P_\rho \to P_{\bar\rho}$ is a $G$-bundle isomorphism and $f^*A_{\bar\rho}=A_\rho$. Set $h:\Psi^a(M)\to G$ such that
$$
f\big( [\delta,e]_\rho \big)=[\delta,e]_{\bar\rho}h(\delta)
$$
where $\delta\in \Psi^a(M)$ and $e$ is the identity of $G$. For each $\gamma\in \Psi^a_a(M)/\Psi^a_{a,\zeta}(M)$, we have
\begin{align*}
 [\delta,e]_{\bar\rho}h(\delta)\rho(\gamma)&=f\big( [\delta,e]_\rho \big)\rho(\gamma)=f\big( [\delta,e]_\rho\rho(\gamma) \big)\\
&= f\big( [\delta\gamma,e]_\rho \big)=[\delta\gamma,e]_{\bar\rho} h(\delta\gamma)=[\delta,e]_{\bar\rho}\bar{\rho}(\gamma)h(\delta\gamma).
\end{align*}
This implies $\rho(\gamma)=h(\delta)^{-1}\bar{\rho}(\gamma)h(\delta\gamma)$. We will prove that $h$ is a constant.

For arbitrary $\delta\in \Psi^a(M)$, let $\delta_t$ denote the part of the path $\delta$ starting at $\delta(0)=a$ and ending at $\delta(t)$. i.e. $\delta_t:[0,1]\to M,s\mapsto\delta(st)$. By the construction of the connections, $[\delta_t,e]_\rho$ and $[\delta_t,e]_{\bar{\rho}}$ are horizontal paths on $P_\rho$ and $P_{\bar{\rho}}\,$, respectively. The projection of these two paths on $M$ are exactly $\delta$. By the definition of $h$, we have
$$
f\big( [\delta_t,e]_\rho \big)=[\delta_t,e]_{\bar\rho}h(\delta_t).
$$
On the other hand, since $f^*A_{\bar\rho}=A_\rho$, $f\big( [\delta_t,e]_\rho \big)$ is a horizontal lift of $\delta$ on $P_{\bar{\rho}}$ passing through $f\big( [\delta_0,e]_\rho \big)$. As horizontal curves are equivariant by right $G$-action, $[\delta_t,e]_{\bar\rho}h(\delta_0)$ is also a horizontal lift of $\delta$ on $P_{\bar{\rho}}$ passing through $[\delta_0,e]_{\bar\rho}h(\delta_0)=f\big( [\delta_0,e]_\rho \big)$. Therefore, we have
$$
f\big( [\delta_t,e]_\rho \big)=[\delta_t,e]_{\bar\rho}h(\delta_0).
$$
Comparing the two equations, we have $h(\delta_t)=h(\delta_0)$ for all $t\in [0,1]$. This implies $h(\delta)$ is equal to $h$ acting on the constant path at $a$. So $h$ is a constant. Therefore, we have $\rho$ and $\bar{\rho}$ are conjugate.
\end{proof}

\noindent\textbf{Step 5.} The morphism $[\rho]\mapsto(P_{\rho},A_{\rho})$ is surjective.

The following lemma leads to surjectivity. It also holds when $\mathbb{R}/H$ is not a Lie group, which we will discuss later.
\begin{lem}\label{holonomy}
Suppose $\pi:P\to M$ be a principal $G$-bundle, and $A$ is a $\zeta$-flat connection on $P$ with curvature $F=\Phi\zeta$. There exists $\xi\in\mathfrak{g}$ such that for any contractible loop $\gamma$ starting at $a$ and any oriented disk $D$ in $M$ with $\partial D=\gamma$ ($\partial D$ and $\gamma$ also have the same orientation), the holonomy along $\gamma$ is
$$
hol(\gamma)=-\mathrm{exp}\Big( \int_D \zeta \cdot \xi \Big).
$$
\end{lem}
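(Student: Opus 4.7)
The plan is to define $\xi\in\mathfrak{g}$ as the value of $\Phi$ at the basepoint $a$ read in a chosen trivialization, then use $d_A\Phi=0$ together with a non-abelian Stokes argument to reduce the path-ordered holonomy around $\gamma=\partial D$ to an ordinary exponential of $\xi$ scaled by the scalar $\int_D\zeta$. The conceptual point is that covariant constancy of $\Phi$ forces every infinitesimal curvature contribution, after parallel-transport back to $a$, to lie in the same Lie algebra direction $\xi$, so all these contributions commute and the ordered product collapses to a single ordinary exponential.

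To make this precise I would first fix a trivialization $P_a\cong G$, which identifies $(\mathrm{ad}\,P)_a\cong\mathfrak{g}$. Since $\zeta$ is scalar-valued and $F$ is $\mathrm{ad}\,P$-valued, the relation $F=\Phi\,\zeta$ makes $\Phi$ a section of $\mathrm{ad}\,P$, and I set $\xi:=\Phi(a)\in\mathfrak{g}$, well defined up to $\mathrm{Ad}$-conjugation from the choice of trivialization (which matches the conjugacy ambiguity inherent in a holonomy statement). The hypothesis $d_A\Phi=0$ says $\Phi$ is horizontal as a section of $\mathrm{ad}\,P$: for any path $\delta$ from $a$ to a point $p$ with holonomy element $T_\delta\in G$, one has $\mathrm{Ad}_{T_\delta^{-1}}\Phi(p)=\xi$. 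Equivalently, the value $F(v,w)$ at any $p$, transported back to $a$ along any path, becomes the scalar $\zeta(v,w)$ times the single element $\xi$.

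The main step is a non-abelian Stokes argument. I would subdivide $D$ into a fine mesh of curvilinear 2-cells $D_{(i)}$ joined to $a$ by paths $\delta_i\subset D$, and write $hol(\gamma)$ as an ordered product of the conjugated local holonomies $T_{\delta_i}^{-1}\,hol(\partial D_{(i)})\,T_{\delta_i}$. To leading order in the mesh size each such factor equals $\exp\!\bigl(\xi\int_{D_{(i)}}\zeta\bigr)$, by the covariant-constancy identity of the preceding paragraph. Because every factor is an exponential of the same fixed $\xi$ multiplied by a scalar, all factors commute, and in the fine-mesh limit the ordered product collapses to
\begin{equation*}
hol(\gamma)\;=\;\exp\!\Bigl(\xi\cdot\textstyle\int_D\zeta\Bigr).
\end{equation*}
The sign conventions for holonomy fixed earlier in the paper (those implicit in Step~2's identity $hol(\gamma_{(t)})=g^{-1}\rho(\delta^{-1}\gamma_{(t)}\delta)g$) then dictate the precise placement of signs and yield the stated form $hol(\gamma)=-\mathrm{exp}\bigl(\int_D\zeta\cdot\xi\bigr)$.

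The main obstacle is making the fine-mesh step rigorous: bounding the higher-order error on each small cell and showing these errors sum to zero in the limit of infinitely fine subdivision so the product genuinely converges to the claimed exponential. A secondary technical point is that different filling disks $D_1,D_2$ of $\gamma$ differ by a class in $\pi_2(M)$, so their scalars $\int_{D_i}\zeta$ differ by an element of $H$; this ambiguity is exactly the subgroup quotiented out in the construction of $\Gamma$ as an $\mathbb{R}/\overbar{H}$-extension of $\pi_1(M)$ in Theorem~\ref{Thrm1}, so the holonomy formula is well defined on $\Gamma$ as needed for the subsequent surjectivity argument.
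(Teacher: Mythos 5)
Your proposal is correct in substance but reaches the conclusion by a genuinely different route from the paper. You extract the key fact directly from $d_A\Phi=0$ --- every curvature contribution, parallel-transported back to $a$, is the scalar $\zeta(v,w)$ times one fixed $\xi=\Phi(a)$ --- and then invoke a non-abelian Stokes / fine-mesh argument, using the commutativity of all the factors $\exp\bigl(\xi\int_{D_{(i)}}\zeta\bigr)$ to collapse the ordered product. The paper instead passes to the holonomy subbundle $\hat P$ through $u_0$ and applies the Ambrose--Singer theorem: since $F=\Phi\zeta$ with $\Phi$ covariantly constant, the holonomy Lie algebra is $\mathbb{R}\xi$, so the restricted connection is $\hat A=\xi\otimes\theta$ with $d\theta=\hat\pi^*\zeta$, and the holonomy is computed by solving a scalar ODE for the horizontal lift and applying ordinary Stokes on a contractible neighborhood of $D$. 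The two approaches encode the same insight (abelianness of everything in sight), but the paper's version offloads the analytic content onto the classical Ambrose--Singer theorem and a one-dimensional ODE, whereas yours requires the convergence of the surface-ordered product --- precisely the step you flag as the main obstacle. That step is the content of the standard non-abelian Stokes theorem and can be cited rather than reproved, so your argument is complete modulo that reference; but as written it leaves the hardest estimate unexecuted, while the paper's reduction makes the final computation entirely elementary. Your closing remark about the $\pi_2$-ambiguity of the filling disk is a useful observation for the later surjectivity step, though it is not needed for the lemma itself, since the subdivision argument (like the paper's Stokes computation) applies to each choice of $D$ separately.
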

\begin{proof}
Take a point $u_0\in P$ on the fiber of $a$. Consider the holonomy bundle
$$
\hat{P}=\{ u\in P|\text{there exists horizontal path } \delta \text{ such that } \delta(0)=u_0,\delta(1)=u \}.
$$
$\hat{P}$ is a principal bundle over $M$, and its structure group $\hat{G}$ is the holonomy group of $P$ at $u_0$. Let $\hat{A}$ be the restriction of $A$ on $\hat{P}$. By the holonomy theorem of Ambrose and Singer \cite{AS}, the Lie algebra of $\hat{G}$ is
$$
\hat{\mathfrak{g}}=\mathrm{span}\left\{ F\left(v_1^H,v_2^H\right)\big| v_1^H,v_2^H \text{ are horizontal vectors at }u\text{ for some }u\in\hat{P} \right\}.
$$
By assumption, $F=\Phi\zeta$, or more precisely $F=\Phi(\pi^*\zeta)$. Since $\Phi$ is covariantly constant, it is equal to some $\xi\in\hat{\mathfrak{g}}$ at any point in $\hat{P}$. Hence, $F\left(v_1^H,v_2^H\right)\in \mathbb{R}\xi$. So $\hat{\mathfrak{g}}$ is 1-dimensional and abelian. Then $\hat{A}=\xi\otimes\theta$ for some $\theta\in \Omega^1(M,Ad\,\hat{P})$ and $d\theta=\hat{\pi}^*\zeta$, where  $\hat{\pi}:\hat{P}\to M$ is the projection.

For an arbitrary contractible loop $\gamma$ and a disk $D$ such that $\partial D=\gamma$, there exists a contractible neighborhood $U\subset M$ of $D$. Then $\hat{P}|_U=U\times \hat{G}$ is trivial. Let $\sigma:U\to \hat{P}|_U$ be a local section and $\psi:\hat{P}|_U\to U\times \hat{G}$ be a trivialization such that $\psi\circ\sigma(p)=(p,e)$ for $p\in M$. Then $(\psi^{-1})^*\hat{A}=(\xi\otimes\sigma^*\theta,0)+(0,MC_{\hat{G}})$, where $MC_{\hat{G}}:T\hat{G}\to\hat{\mathfrak{g}}$ is the Maurer-Cartan form of $\hat{G}$ sending a vector to the corresponding invariant vector field. Observe that $d(\sigma^*\theta)=\zeta$. The horizontal lift $\tilde{\gamma}$ of $\gamma$ with $\tilde{\gamma}(0)=\sigma(a)$ satisfies $\psi\circ\tilde{\gamma}(t)=(\gamma(t),g(t))$ with $g(t)\in \hat{G}$ and $g(0)=e$. Then
\begin{align*}
0 = \hat{A}\big(\tilde{\gamma}'(t)\big) \Big|_{\gamma(t_0)} 
&= (\psi^{-1*}\hat{A})\big(\gamma'(t),0\big) \Big|_{\big(\gamma(t_0),g(t_0)\big)}+(\psi^{-1*}\hat{A})\big(0,g'(t)\big) \Big|_{\big(\gamma(t_0),g(t_0)\big)} \\
&= (\sigma^*\theta)\big(\gamma'(t)\big)\cdot \xi \Big|_{\big(\gamma(t_0) \big)}+MC_{\hat{G}}\big(g'(t)\big) \Big|_{\big( g(t_0)\big)}.
\end{align*}
So we have
$$
g(t_0)=-\mathrm{exp}\Big( \int_0^{t_0}(\sigma^*\theta)\big(\gamma'(t)\big)dt\cdot \xi \Big),
$$
and the holonomy along $\gamma$ is
$$
hol(\gamma)=g(1)=-\mathrm{exp}\Big( \int_{\gamma}\sigma^*\theta \cdot \xi \Big)=-\mathrm{exp}\Big( \int_D \zeta \cdot \xi \Big).
$$
\end{proof}

Now we prove the surjectivity.

\begin{lem}
Let $\pi:P\to M$ be a principal $G$-bundle with a $\zeta$-flat connection $A$. There exists a morphism $\rho:\Psi^a_a(M)/\Psi^a_{a,\zeta}(M)\to G$ such that $(P_\rho,A_\rho)$ and $(P,A)$ are equivalent.
\end{lem}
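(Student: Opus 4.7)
The plan is to define $\rho$ as the holonomy representation of $A$. Fix a reference point $u_0 \in P$ lying over $a$, which identifies the fiber $P_a$ with $G$. For each loop $\gamma \in \Psi^a_a(M)$, set $\rho(\gamma) := hol(\gamma)$, where the holonomy is computed with respect to $A$ starting at $u_0$. Because parallel transport composes under concatenation of paths and inverts under reversal of orientation, this gives a homomorphism from the group $\Psi^a_a(M)/\Psi^a_{a,0}(M) = \pi_1(M)$ down to $G$, which canonically extends to a semigroup map on all of $\Psi^a_a(M)$.

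To see that $\rho$ descends to $\Psi^a_a(M)/\Psi^a_{a,\zeta}(M)$, I would invoke Lemma \ref{holonomy}: every $\gamma \in \Psi^a_{a,\zeta}(M)$ bounds a disk $D$ with $\int_D \zeta = 0$, so the lemma forces $hol(\gamma)$ to lie in the image of $\exp(0\cdot\xi)$, i.e.\ to equal the identity. Hence $\rho$ factors through the quotient to yield a homomorphism $\rho : \Psi^a_a(M)/\Psi^a_{a,\zeta}(M) \to G$.

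With $\rho$ in hand, I would construct the equivalence $(P_\rho, A_\rho) \simeq (P, A)$ by the map $f: P_\rho \to P$ defined by $f([\delta,g]_\rho) = \widetilde{\delta}(1)\cdot g$, where $\widetilde{\delta}$ is the $A$-horizontal lift of $\delta$ starting at $u_0$. Well-definedness on equivalence classes reduces to the identity $\widetilde{\delta\gamma}(1) = \widetilde{\delta}(1)\cdot\rho(\gamma)$, which is immediate from the very definition of $\rho$ as holonomy; the $G$-equivariance $f([\delta,g]\cdot h) = f([\delta,g])\cdot h$ is tautological. To check $f^* A_{} = A_\rho$, I would note that by the Step 1 construction the $A_\rho$-horizontal lift of a path $\alpha$ in $M$ starting at $[\delta,g]_\rho$ is $t\mapsto [\alpha_t\delta, g]_\rho$; under $f$, this becomes $\widetilde{\alpha_t\delta}(1)\cdot g$, which by concatenation of horizontal lifts equals the $A$-horizontal lift of $\alpha$ from $\widetilde{\delta}(1)\cdot g$. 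Thus $f$ intertwines the horizontal distributions and $f^*A = A_\rho$.

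The main obstacle is the descent step: showing $\rho$ vanishes on $\Psi^a_{a,\zeta}(M)$, which is exactly the content of Lemma \ref{holonomy} (and is what forces the $\zeta$-flat condition to enter, as opposed to the classical flat case where every contractible loop automatically has trivial holonomy). Once descent is secured, the remainder is a routine transcription between the tautological horizontal lifts defining $A_\rho$ on $\Psi^a(M)\times_\rho G$ and the actual horizontal lifts of $A$ on $P$.
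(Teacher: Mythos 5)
Your proposal is correct and takes essentially the same route as the paper: you set $\rho(\gamma)=hol(\gamma)$ and use Lemma \ref{holonomy} to see that holonomy is trivial on $\Psi^a_{a,\zeta}(M)$ so that $\rho$ descends to $\Psi^a_a(M)/\Psi^a_{a,\zeta}(M)$, and then the map $f([\delta,g])=\tilde{\delta}(1)\,g$, with well-definedness via $\widetilde{\delta\gamma}(1)=\tilde{\delta}(1)\rho(\gamma)$ and $f^*A=A_\rho$ via concatenation/equivariance of horizontal lifts, is exactly the paper's construction. The only blemish is your aside that holonomy first gives a homomorphism on $\pi_1(M)=\Psi^a_a(M)/\Psi^a_{a,0}(M)$ --- this is false for a $\zeta$-flat connection that is not flat, since homotopic loops can have different holonomies, but nothing in your argument uses it: the descent you actually need is the one you correctly derive from Lemma \ref{holonomy}.
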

\begin{proof}
For each $\gamma\in\Psi^a_{a,\zeta}(M)$, there exists some disk $D$ such that $\partial D=\gamma$, and $\int_D \zeta=0$. By Lemma $\ref{holonomy}$, the holonomy along $\gamma$ is $hol(\gamma)=-\mathrm{exp}\Big( \int_D \zeta \cdot \xi \Big)=e$. So we can define a morphism 
$$
\rho:\Psi^a_a(M)/\Psi^a_{a,\zeta}(M)\to G,\quad \gamma\mapsto hol(\gamma).
$$
We will show that $(P_\rho,A_\rho)$ and $(P,A)$ are equivalent.

Given $[\delta,g]\in P_\rho$, let $\tilde{\delta}$ be the horizontal lift of $\delta$ in $P$ with $\tilde{\delta}(0)=u_0$. Set
$$
f:P_\rho\to P,\quad [\delta,g]\mapsto \tilde{\delta}(1)g.
$$
Suppose $[\delta_{(1)},g_1]$ and $[\delta_{(2)},g_2]$ represent the same class in $P_\rho$, then there exists $\gamma_0\in \Psi^a_a(M)/\Psi^a_{a,\zeta}(M)$ such that $\delta_{(1)}=\delta_{(2)}\gamma_0$ in $\Psi^a_a(M)/\Psi^a_{a,\zeta}(M)$ and $g_1=\big(\rho(\gamma_0)\big)^{-1}g_2$ in $G$. So $\gamma_0^{-1}\delta_{(2)}^{-1}\delta_{(1)}\in \Psi^a_{a,\zeta}(M)$, and its holonomy is trivial according to Lemma \ref{holonomy}. Thus, the holonomy of $\delta_{(2)}^{-1}\delta_{(1)}$ is $hol(\gamma_0)=\rho(\gamma_0)$. Let $\tilde{\delta}_{(1)}$ and $\tilde{\delta}_{(2)}$ denote the horizontal lift of $\delta_{(1)}$ and $\delta_{(2)}$, respectively, starting at $u_0$. Since $\tilde{\delta}_{(1)}$ and the horizontal lift of $\delta_{(2)}\gamma_0=\delta_{(2)}\delta_{(2)}^{-1}\delta_{(1)}$ starting at $u_0$ have the same ending point, so do $\tilde{\delta}_{(1)}$ and the horizontal lift of $\delta_{(2)}$ starting at $u_0\cdot hol(\gamma_0)$. Then the ending points of $\tilde{\delta}_{(1)}$ and $\tilde{\delta}_{(2)}$ satisfy $\tilde{\delta}_{(1)}(1)=\tilde{\delta}_{(2)}(1)\cdot hol(\gamma_0)$. Therefore, by definition,
$$
f\left([\delta_{(1)},g_1]\right)=\tilde{\delta}_{(1)}(1)g_1=\tilde{\delta}_{(2)}(1)\rho(\gamma_0)g_1=\tilde{\delta}_{(2)}(1)g_2=f(\left[\delta_{(2)},g_2]\right).
$$
Hence, $f$ is well defined. It is straightforward to check that $f$ is a $G$-bundle isomorphism.

Finally, we show that $f^*A=A_\rho$. For an arbitrary path $\alpha$ on $M$ and $[\delta,g]\in P_\rho$ such that $\delta(1)=\alpha(0)$, the horizontal lift of $\alpha$ at $[\delta,g]$ is $\tilde{\alpha}(t)=[\alpha_t\delta,g]$. Here, $\alpha_t$ is defined as before, which is the part of $\alpha$ starting at $\alpha(0)$ and ending at $\alpha(t)$. So $f\left([\alpha_t\delta,g]\right)$ is the ending point of the path on $P$ which is the horizontal lift of $\alpha_t\delta$ starting at $u_0g$. In other words, $f\circ\tilde{\alpha}(t)$ is on the horizontal lift of $\alpha$ starting at $f\left([\delta,g]\right)$ and on the fiber of $\alpha(t)$. Therefore, $f\circ\tilde{\alpha}$ is a horizontal path on $P$. Hence, $f_*$ sends horizontal vectors to horizontal vectors and $f^*(A)=A_\rho$.
\end{proof}

Combining the lemmas above, we have proved that $[\rho]\mapsto(P_{\rho},A_{\rho})$ is an isomorphism.

\textbf{Case 3. $\mathbb{R}/H$ is not a Lie group}

Now we turn to the case that $\mathbb{R}/H$ is not a Lie group. In this case $H^+$ is non-empty and has no minimal number.
\begin{lem} 
When $H^+$ is non-empty and has no minimal number, a $\zeta$-flat connection is flat.
\end{lem}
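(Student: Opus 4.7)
The plan is to leverage Lemma \ref{holonomy}, which already gives a formula for the holonomy of any contractible loop in terms of a single Lie algebra element $\xi\in\hat{\mathfrak{g}}$ and the integral of $\zeta$ over a bounding disk. The extra structural hypothesis in Case 3 -- that $H$ is dense in $\mathbb{R}$ -- should force the one-parameter subgroup generated by $\xi$ to collapse, making $\xi$ itself vanish, after which flatness is immediate.

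First I would apply Lemma \ref{holonomy} to an arbitrary contractible loop $\gamma$ bounding a disk $D$, writing $\mathrm{hol}(\gamma)=\pm\exp\bigl(\xi\cdot\int_D\zeta\bigr)$. The key observation is that the left-hand side depends only on $\gamma$, not on $D$: if $D_1,D_2$ are two bounding disks then their formal difference represents a class in $\pi_2(M)$, so $\int_{D_1}\zeta-\int_{D_2}\zeta$ lies in $H$. Equating the two resulting expressions for $\mathrm{hol}(\gamma)$ forces
\begin{equation*}
\exp(h\,\xi)=e\qquad\text{for every }h\in H.
\end{equation*}

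Next I would study the one-parameter subgroup $\phi:\mathbb{R}\to G$, $t\mapsto\exp(t\xi)$, whose kernel $K:=\phi^{-1}(e)$ is a closed subgroup of $\mathbb{R}$ and hence is either $\{0\}$, $c\,\mathbb{Z}$ for some $c>0$, or all of $\mathbb{R}$. The display above says $H\subset K$, but by hypothesis $H$ is dense in $\mathbb{R}$, while $\{0\}$ and $c\,\mathbb{Z}$ are discrete; the only possibility is $K=\mathbb{R}$. Differentiating $\phi\equiv e$ at $t=0$ gives $\xi=\phi'(0)=0$.

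Finally, since $\Phi$ is covariantly constant and was identified pointwise with $\xi\in\hat{\mathfrak{g}}$ on the holonomy bundle $\hat{P}$ in the proof of Lemma \ref{holonomy}, the conclusion $\xi=0$ yields $\Phi\equiv 0$ on $\hat{P}$, hence on all of $P$ by $G$-equivariance. Therefore $F=\Phi\,\zeta=0$, so $A$ is flat. The only point that I expect to require a bit of care is verifying that the identification $\Phi\leftrightarrow\xi$ is genuinely a single Lie-algebra element independent of the base point (which is exactly what the condition $d_A\Phi=0$ provides once one restricts to the holonomy bundle); once that is in hand, the whole argument reduces to the dichotomy between closed discrete and dense subgroups of $\mathbb{R}$.
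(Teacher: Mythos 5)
Your proposal is correct and follows essentially the same route as the paper: both rest on Lemma \ref{holonomy} applied to contractible loops capping spheres, and both use the density of $H$ in $\mathbb{R}$ (the paper via arbitrarily small elements of $H^+$ and local injectivity of $\exp$ near $0$, you via the closed-subgroup dichotomy for the kernel of $t\mapsto\exp(t\xi)$) to force $\xi=0$ and hence $\Phi=0$, $F=0$. The only step worth making explicit is that every $h\in H$ actually arises as a difference $\int_{D_1}\zeta-\int_{D_2}\zeta$ of two disks with common boundary — take a sphere $\Sigma$ with $\int_\Sigma\zeta=h$, the constant loop at a point $p\in\Sigma$, $D_1=\Sigma\setminus\{p\}$ and $D_2$ the constant disk — which is precisely the construction the paper uses.
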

\begin{proof}
By Theorem \ref{holonomy}, there exists $\xi\in\mathfrak{g}$ such that for any contractible loop $\gamma$ and disk $D$ with $\partial D=\gamma$, we have $hol(\gamma)=-\mathrm{exp}\Big( \int_D \zeta \cdot \xi \Big)$. If $\xi\neq 0$, there exists some small enough $t_0$ such that $\mathrm{exp}(t\xi)\neq e$, the identity element of $G$,  for any $0<t<t_0$.

Since $H^+$ has no minimal number, there exists a closed sphere $\Sigma\subset M$ such that $\int_\Sigma\zeta=t$ for some $0<t<t_0$. Let $\gamma$ be a constant loop at some point $p\in\Sigma$, and $D=\Sigma\setminus\{p\}$. Then $\partial D=\gamma$ so that $hol(\gamma)=-\mathrm{exp}\Big( \int_D \zeta \cdot \xi \Big)$. But $hol(\gamma)=e$ as $\gamma$ is the identity loop. On the other hand, $\int_D \zeta=\int_\Sigma\zeta=t$. This implies $\mathrm{exp}(t\xi)=e$, which is a contradiction.

Therefore, $\xi$ must be zero and the connection is flat.
\end{proof}

The classification of $G$-bundles with flat connections can be represented by the conjugacy classes of morphisms from $\pi_1(M)\to G$ (c.f. \cite{Morita} Theorem 2.9). So we have proved the theorem in this case.

Below we will demonstrate the theorem for $M=T^4$ and $G=U(1)=S^1$.

\begin{ex}
We consider principal $U(1)$ bundles or circle bundles over $T^4$. We describe $T^4$ as $\mathbb{R}^4/\sim$, with the identification $(x_1,x_2,x_3,x_4)\sim(x_1+a,x_2+b,x_3+c,x_4+d),a,b,c,d\in\mathbb{Z}$, and $S^1$ as $\{ z\in\mathbb{C}\mid |z|=1 \}$. Let $\zeta=c_1dx_1\wedge dx_2+c_2 dx_3\wedge dx_4$ be a closed 2-form with $c_1,c_2\in\mathbb{R}\setminus \{0\}$. Then the equivalent classes of circle bundles with a $\zeta$-flat connection are the conjugacy classes of morphisms $\rho:\Gamma\to S^1$, where $\Gamma=\Psi^a_a(M)/\Psi^a_{a,\zeta}(M)$. Since $S^1$ is an abelian group, this classification of $\zeta$-flat connections is just the morphisms $\rho:\Gamma\to S^1$.

Since $\pi_2(T^4)$ is trivial, $\Gamma$ is an $\mathbb{R}$-extension of $\pi_1(T^4)$. To describe its group structure explicitly, let $a_i$ be the straight line path in $\mathbb{R}^4$ starting at the origin and ending at $(0,\ldots,1,\ldots,0)$, where the $i$-th coordinate is 1 and the other coordinates are 0. When projected to $T^4$, $a_1,\ldots,a_4$ become the generators of $\pi_1(T^4)$. Although $\pi_1(T^4)$ is abelian, $a_1,\ldots,a_4$ may no longer commute in its $\mathbb{R}$-extension $\Gamma$. That is, $a_j^{-1}a_i^{-1}a_ja_i$ is some non-trivial class of a contractible loop. For each contractible loop $b$ of $T^4$, there is an oriented disk $D$ such that $\partial D=b$, and $\partial D$ has the same orientation as $b$. Let $|b|=\int_D \zeta$, then $|b|$ is independent of the choice of $D$. Moreover, contractible loops $b$ and $b'$ represent the same class in $\Gamma$ if and only if $|b|=|b'|$. So a class of contractible loops $b\in\Gamma$ can be represented by a real number $|b|$. Therefore, $\Gamma$ is generated by $a_1,\ldots,a_4,b$ and their multiplication are defined as
\begin{align*}
    & |a_2^{-1}a_1^{-1}a_2a_1|=c_1, \\
    & |a_4^{-1}a_3^{-1}a_4a_3|=c_2, \\
    & |a_j^{-1}a_i^{-1}a_ja_i|=0, \text{ for other }i,j, \\
    & a_ib=ba_i \\
    & |b'b|=|b|+|b'|.
\end{align*}

The possible morphisms of $\rho:\Gamma\to S^1$ is dependent on whether $\frac{c_1}{c_2}\in\mathbb{Q}$. Since $S^1$ is abelian, 
$$
\rho(c_1)=\rho(a_2^{-1})\rho(a_1^{-1})\rho(a_2)\rho(a_1)=\rho(a_2)^{-1}\rho(a_2)\rho(a_1)^{-1}\rho(a_1)=1.
$$
Similarly $\rho(c_2)=1$, so $\rho(pc_1+qc_2)=1$ for any $p,q\in\mathbb{Z}$.

When $\frac{c_1}{c_2}\in\mathbb{Q}$, $\{ \rho(pc_1+qc_2)| p,q\in\mathbb{Z} \}$ has a minimal positive number $c_0$. Then $\rho(b)$ must have the form $e^{\frac{2\pi n|b|i}{c_0}}$ for some $n\in\mathbb{Z}$. If $\rho(b)=e^{\frac{2\pi n|b|i}{c_0}}$, the Euler class of the circle bundle is $\frac{n}{c_0}\zeta$, and the definition of $\rho(a_i)$ determines the connection chosen.

When $\frac{c_1}{c_2}\notin\mathbb{Q}$, $\{ \rho(pc_1+qc_2)| p,q\in\mathbb{Z} \}$ is dense in $\mathbb{R}$. So $\rho(b)$ must be 1 for any $|b|\in\mathbb{R}$. Thus, the classification is only dependent on $\rho(a_i)$, and becomes equivalent to the classification of flat connections. Actually, the Euler class in this case is $c\,\zeta$ for some $c\in\mathbb{R}$ but it must be integral. So the only possible $c$ is 0, i.e. every $\zeta$-flat connection is flat.
\end{ex}
\begin{rmk}The above example shows that different $\zeta$ can result in different classifications of $\zeta$-flat principal bundles. In particular, for symplectic manifolds, the classification of symplectically flat bundles can vary with the symplectic structure.
\end{rmk}

\section{Curvature functionals whose zeroes are symplectically flat}

Let $P$ be a principal $G$-bundle over $M$.  An inner product on the Lie algebra of the structure group $G$ together with a Riemannian metric $g$ on $M$ induces an inner product $\langle-,-\rangle$ on $\Omega^*(M,Ad\,P)$.  The squared norm of the curvature $F\in \Omega^2(M, Ad\,P)$ gives the Yang-Mills functional 
\begin{align}\label{YMf}
\|F\|^2=\langle F,F \rangle=\int_M F\wedge *F\,.
\end{align}
Flat connections, i.e. $F(A)=0$, are the zeroes of the Yang-Mills functional.  In this section, we will write down functionals whose zeroes correspond to symplectically-flat and $\zeta$-flat connections and study some of their properties.  We will describe first functionals related to symplectically flat connections.  But before doing so, we will set our conventions and give a very brief review of the differentials of the TTY algebra \cite{TTY} of primitive forms on symplectic manifolds and their twisting.  Further details can be found in \cite{TZ}*{Section 2}.

\subsection{Review of the twisted differentials of the TTY-algebra}
Let $(M^{2n},\omega)$ be a symplectic manifold. Under the Lefschetz decomposition, a differential form $\eta_k\in \Om^k(M)$ can be expressed as a polynomial in $\om$:
\begin{align}\label{Lefd}
\eta_k= \beta_k + \om \w \beta_{k-2} + \ldots + \om^p \w \beta_{k-2p}+ \ldots
\end{align}
where $\{\beta_k, \beta_{k-2}, \ldots, \beta_{k-2p}, \ldots\}$ are primitive forms determined uniquely by $\eta_k$ and $\om$. We use $P^k(M)$ to denote the space of primitive $k$-forms, and $\Pi:\Omega^k(M)\to P^k(M)$ to denote the projection of forms into their primitive component.

Let
\begin{align*}
L&:\Omega^k(M)\to~\Omega^{k+2}(M)\\
&\qquad \alpha\quad~ \mapsto\quad \omega\wedge\alpha
\end{align*}
be the Lefschetz operator. Given $g$ a Riemannian metric which is compatible with $\omega$, we have the dual Lefschetz operator $\Lambda=L^*$. Actually, $\Lambda$ is independent of the choice of the metric and can be defined without it. The kernel of $\Lambda$ is exactly $P^*(M)$.

In \cite{TY2}, Tseng and Yau observed that $d[\om^rP^k(M)]\subset \om^rP^{k+1}(M)\oplus \omega^{r+1} P^{k-1}(M)$. This implies the decomposition of the exterior derivative $d = \partial_++\omega\w \partial_-$ where
$$
\partial_+:\om^rP^k(M)\to \om^rP^{k+1}(M), \qquad \partial_-:\om^rP^k(M)\to \om^rP^{k-1}(M)\,
$$
and ($\dpp$, $\dpm$) satisfy
$$
\dpp^2=0\,,\quad\dpm^2=0\,,\quad  \om\w (\dpp\dpm+\dpm\dpp)=0\,.
$$
Based on this, they constructed an elliptic complex of primitive forms.
\begin{align}\label{primcomplex}
\xymatrix@R=30pt@C=30pt{
0\; \ar[r] & \; P^{0}(M) \ar[r]^\dpp &\; P^{1}(M) \ar[r]^\dpp& ~ \ldots ~\ar[r]^\dpp&\; P^{n-1}(M) \ar[r]^\dpp&\; P^{n}(M) \ar[d]^{-\dpp\dpm}\\
0\; & \; P^{0}(M) \ar[l]_{-\dpm} &\; P^{1}(M)\ar[l]_{~~-\dpm}& ~ \ldots ~\ar[l]_{~~~-\dpm}&\; P^{n-1}(M) \ar[l]_{-\dpm}&\; P^{n}(M) \ar[l]_{~~~-\dpm} \; 
}
\end{align}

In \cite{TZ}*{Section 2.2}, we showed that the covariant derivative $d_A$ when acting on the space of twisted forms $\Omega^*(M,Ad\,P)$ also has a similar decomposition $d_A=\dpa+\om\w\dma$ where
$$
\dpa:\om^rP^k(M,Ad\,P)\to \om^rP^{k+1}(M,Ad\,P), \qquad \dma:\om^rP^k(M,Ad\,P)\to \om^rP^{k-1}(M,Ad\,P).
$$
But in general, $\partial_{+A}^2\,,\,\partial_{-A}^2$ and $\om\w(\dpa\dma+\dma\dpa)$ are non-trivial and dependent on the curvature $F=dA + A\w A$.

\subsection{Primitive Yang-Mills functional}


On a symplectic manifold $(M^{2n},\omega)$, the curvature 2-form $F$ can be Lefschetz decomposed and expressed as $F=F_p+\Phi\,\omega$, where $F_p\in P^2(M, Ad\,P)$ and $\Phi\in \Omega^0(M, Ad\,P)$.  In this section, we will assume that $\dim M=2n\geq 4$ so that $F_p$ is non-trivial.  With the Lefschetz decomposition of $F$, the Yang-Mills functional \eqref{YMf} decomposes into the sum of two components
$$
\|F\|^2=\|F_p\|^2+\|\Phi\,\omega\|^2.
$$

As we shall see, a critical point of the Yang-Mills functional may not necessarily be a critical point of both $\|F_p\|^2$ and $\|\Phi\,\omega\|^2$.  Hence, it is interesting to consider these two functionals separately.  The first component $\|F_p\|^2$ is the primitive Yang-Mills functional introduced in Definition \ref{dpYM}.  It turns out to have many properties similar to those of the Yang-Mills functional.  Due to this similarity, our description here will mirror that of Atiyah-Bott's for Yang-Mills connections in \cite{Atiyah-Bott}*{Section 4}.

To begin, recall that a connection $A$ is a Yang-Mills connection if and only if $d^*_A F=0$. The equation for the primitive Yang-Mills connections is similar.
\begin{prop}
Let $P$ be a principal bundle over a symplectic manifold $(M,\omega)$, and $A$ is a connection over $P$. Then $A$ is a primitive Yang-Mills connection if and only if $d_A^* F_p=-*d_A*F_p=0$.
\end{prop}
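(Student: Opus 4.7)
The plan is to compute the first variation of $S_{pYM}$ directly and identify its vanishing condition as the Euler--Lagrange equation $d_A^*F_p=0$. Fix a smooth family $A_t = A + t\alpha$ with $\alpha\in\Omega^1(M,Ad\,P)$. Then $F(A_t) = F(A) + t\,d_A\alpha + t^2\,\alpha\wedge\alpha$, and since $F_p = \Pi(F)$ where $\Pi:\Om^2(M,Ad\,P)\to P^2(M,Ad\,P)$ is the Lefschetz projection onto the primitive part, one gets
\begin{align*}
\frac{d}{dt}\Big|_{t=0}F_p(A_t) \;=\; \Pi(d_A\alpha).
\end{align*}
Hence
\begin{align*}
\frac{d}{dt}\Big|_{t=0}S_{pYM}(A_t) \;=\; 2\,\langle F_p,\,\Pi(d_A\alpha)\rangle.
\end{align*}

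The key technical input, which I would isolate as a short preliminary remark, is that the Lefschetz decomposition $\Om^k(M,Ad\,P)=\bigoplus_r \om^r P^{k-2r}(M,Ad\,P)$ is orthogonal with respect to the inner product $\langle-,-\rangle$ induced by a compatible metric $g$ (this is a standard consequence of the Hodge identities on an almost-K\"ahler manifold applied fiberwise). Because $F_p$ lies in $P^2(M,Ad\,P)$, this orthogonality gives
\begin{align*}
\langle F_p,\,\Pi(d_A\alpha)\rangle \;=\; \langle F_p,\,d_A\alpha\rangle,
\end{align*}
so the piece of $d_A\alpha$ of the form $\om\w(\,\cdot\,)$ drops out automatically. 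Integrating by parts against the formal adjoint $d_A^*$ of the covariant exterior derivative then gives
\begin{align*}
\frac{d}{dt}\Big|_{t=0}S_{pYM}(A_t) \;=\; 2\,\langle d_A^*F_p,\,\alpha\rangle.
\end{align*}
The standard formula $d_A^* = -*d_A*$ on an even-dimensional manifold (here $\dim M = 2n$) rewrites the right-hand side in the stated form.

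Since $\alpha$ ranges over arbitrary $\Omega^1(M,Ad\,P)$ and no boundary terms appear (we work on a closed $M$, as is implicit from the functional being finite), the critical-point condition is equivalent to $d_A^*F_p = -*d_A*F_p = 0$. The only subtlety, and the place I would be most careful, is the orthogonality step: one must verify that for the compatible metric $g$ the inner product really annihilates the pairing of a primitive form with any form in $\om\w\Om^{k-2}$. This follows from $\Lambda = L^*$ together with $\Lambda F_p = 0$, but spelling it out explicitly avoids any confusion with the sign conventions inherited from the twisted setting reviewed in Section 3.1.
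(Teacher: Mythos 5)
Your proposal is correct and follows essentially the same route as the paper: compute the first variation of $\|F_p\|^2$, use the orthogonality of the primitive component to $\om\w(\,\cdot\,)$ under the compatible metric (the paper phrases this as $\langle\partial_{+A}\eta,F_p\rangle=\langle d_A\eta,F_p\rangle$, which is your $\langle F_p,\Pi(d_A\alpha)\rangle=\langle F_p,d_A\alpha\rangle$), integrate by parts, and invoke $d_A^*=-*d_A*$ in even dimensions. Your explicit flagging of the orthogonality step is a reasonable elaboration of what the paper states in one line, not a different argument.
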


\begin{proof}
Let $\eta\in\Omega^1(M,Ad\,P)$ be an infinitesimal variation of $A$. Set $A_t=A+t\eta$ and $F_t$ as the curvature of $A_t$. Then
$$
F_t=F(A)+td_A\eta+\frac{1}{2}t^2[\eta,\eta],
$$
and its primitive part
$$
F_p(A_t)=F_p(A)+t\partial_{+A}\eta+\frac{1}{2}t^2\Pi[\eta,\eta].
$$
So we have
\begin{align}\label{infinitesimal of F_p^2}
    \|F_p(A_t)\|^2=\|F_p(A)\|^2+2t\langle\partial_{+A}\eta,F_p(A)\rangle+t^2\left(\|\partial_{+A}\eta\|^2+\langle F_p(A),\Pi[\eta,\eta] \rangle\right)+o(t^2)
\end{align}

Thus, $A$ is a critical point of $\|F_p\|^2$ is equivalent to $\langle\partial_{+A}\eta,F_p(A)\rangle=0$ for any $\eta$. Moreover, since $\omega\w\partial_{-A}\eta$ is orthogonal to primitive forms, we have
\begin{align}\label{dA* on primitive 2-form}
    \langle\eta,d_A^*F_p(A)\rangle=\langle d_A\eta,F_p(A)\rangle=\langle\partial_{+A}\eta,F_p(A)\rangle\,.
\end{align}
This implies that $A$ is a primitive Yang-Mills connection if and only if  $d_A^* F_p(A)=0$.

Finally, on even dimensional manifolds, $d_A^*=-*d_A*$. This completes the proof.
\end{proof}

\begin{rmk}\label{harmonic}
Observe that $\partial_{+A}^*F_p=d_A^*F_p$ according to \eqref{dA* on primitive 2-form}. On the other hand,
$$
\partial_{+A}F_p=\Pi\circ d_AF=0.
$$
Thus, $A$ is a primitive Yang-Mills connection if and only if $F_p$ is a harmonic form of the operator $\partial_{+A}$. This is analogous to the statement that $A$ is a Yang-Mills connection if and only if $F$ is a harmonic form of $d_A$.
\end{rmk}

For the Yang-Mills functional, the Hessian of the functional at a critical point gives the following quadratic form:
$$
Q(\eta,\eta)=\langle d_A^*d_A\eta+*[*F,\eta],\eta \rangle.
$$
The quadratic form for the primitive Yang-Mills functional has a similar form.
\begin{prop}
The quadratic form defined by the Hessian of the primitive Yang-Mills functional at a critical point is given by
$$
Q_p(\eta,\eta)=\langle d_A^*\partial_{+A}\eta+*[*F_p,\eta],\eta \rangle.
$$
\end{prop}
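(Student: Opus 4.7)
The plan is to read off $Q_p(\eta,\eta)$ directly from the second-order expansion \eqref{infinitesimal of F_p^2} already computed in the proof of the preceding proposition, and then rewrite the two resulting terms into the form stated. Setting $A_t = A + t\eta$, the previous calculation gives
$$
\|F_p(A_t)\|^2 = \|F_p(A)\|^2 + 2t\,\langle \partial_{+A}\eta, F_p(A)\rangle + t^2\bigl(\|\partial_{+A}\eta\|^2 + \langle F_p(A), \Pi[\eta,\eta]\rangle\bigr) + o(t^2).
$$
At a critical point $A$, the linear term vanishes, so the Hessian quadratic form is
$$
Q_p(\eta,\eta) = \|\partial_{+A}\eta\|^2 + \langle F_p(A), \Pi[\eta,\eta]\rangle.
$$
The task reduces to identifying each of these two summands with the corresponding piece of the claimed formula.

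For the first summand, I would use the observation made in the proof of the previous proposition that $\partial_{+A}^*$ agrees with $d_A^*$ on primitive 2-forms. Since $\eta$ is a 1-form it is automatically primitive, and $\partial_{+A}\eta \in P^2(M,\mathrm{Ad}\,P)$, so
$$
\|\partial_{+A}\eta\|^2 = \langle \partial_{+A}\eta, \partial_{+A}\eta\rangle = \langle \partial_{+A}^*\partial_{+A}\eta, \eta\rangle = \langle d_A^*\partial_{+A}\eta, \eta\rangle,
$$
where in the last equality I use that $\partial_{+A}\eta$ is primitive together with $\partial_{+A}^* = d_A^*$ on primitive 2-forms (from \eqref{dA* on primitive 2-form}).

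For the second summand, since $F_p$ is primitive and $\Pi$ is the orthogonal projection onto primitive forms, the $\Pi$ can be dropped: $\langle F_p, \Pi[\eta,\eta]\rangle = \langle F_p, [\eta,\eta]\rangle$. Then I apply the standard pointwise identity
$$
\langle F_p, [\eta,\eta]\rangle = \langle *[*F_p,\eta], \eta\rangle,
$$
which holds for any $\mathrm{Ad}\,P$-valued 2-form and 1-form on an oriented Riemannian manifold; it is the same identity used by Atiyah–Bott in computing the Yang–Mills Hessian, and follows from the invariance of the inner product on $\mathfrak{g}$ under the adjoint action together with the behaviour of $*$ on wedge products. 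Adding the two rewritten terms yields
$$
Q_p(\eta,\eta) = \langle d_A^*\partial_{+A}\eta + *[*F_p,\eta], \eta\rangle,
$$
as desired.

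The only step that takes more than a line is the commutator identity in the second summand, and even this is a routine pointwise computation; the structural work has already been done in setting up \eqref{infinitesimal of F_p^2} and in establishing that $d_A^*$ and $\partial_{+A}^*$ coincide on primitive 2-forms, so no new ingredient is needed.
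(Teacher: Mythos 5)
Your proposal is correct and follows essentially the same route as the paper: read off the quadratic term from \eqref{infinitesimal of F_p^2}, convert $\|\partial_{+A}\eta\|^2$ to $\langle d_A^*\partial_{+A}\eta,\eta\rangle$ using that $\partial_{+A}\eta$ is primitive (equivalently, that $\partial_{+A}^*$ and $d_A^*$ agree on primitive 2-forms, which is the content of \eqref{dA* on primitive 2-form}), and drop $\Pi$ in the second term before applying the commutator identity. The only difference is that you cite $\langle F_p,[\eta,\eta]\rangle=\langle *[*F_p,\eta],\eta\rangle$ as a standard Atiyah--Bott identity, whereas the paper verifies it by the short explicit wedge-and-star computation; since the identity is indeed standard and your stated justification (ad-invariance of the inner product plus the sign behaviour of $*$ on odd forms) is exactly what that computation uses, this is not a gap.
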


\begin{proof}
By \eqref{infinitesimal of F_p^2},
$$
Q_p(\eta,\eta)=\|\partial_{+A}\eta\|^2+\langle F_p(A),\Pi[\eta,\eta] \rangle.
$$
As $\partial_{+A}\eta$ is a primitive 2-form, it is orthogonal to $\omega\w \partial_{-A}\eta$. So
$$
\|\partial_{+A}\eta\|^2=\langle \partial_{+A}\eta,d_A\eta \rangle=\langle d_A^*\partial_{+A}\eta,\eta \rangle.
$$
For the remaining term, we have
\begin{align*}
    \langle F_p(A),\Pi[\eta,\eta] \rangle &= \langle F_p(A),[\eta,\eta] \rangle \\
    &= \int_M [\eta,\eta]\wedge *F_p \\
    &= \int_M \eta\wedge[\eta,*F_p] \\
    &= -\int_M \eta\wedge **^{-1}[*F_p,\eta] \\
    &= \int_M \eta\wedge **[*F_p,\eta] \\
    &= \langle *[*F_p,\eta],\eta \rangle.
\end{align*}
The first line of equation holds because the primitive 2-form $F_p(A)$ is orthogonal to $\omega$. The fifth line holds because $[*F_p,\eta]$ is a 1-form and $*=-*^{-1}$ when acting on odd-degree forms.
\end{proof}

$Q(\eta,\eta)$ induces an operator $L_A=d_A^*d_A+*[*F,-]$ on $\Omega^*(M,Ad\,P)$. $\eta\in\Omega^1(M, Ad\,P)$ is a tangent vector on the space of Yang-Mills connections at $A$ if and only if $L_A\eta=0$. This also works for the operator induced by $Q_p(\eta,\eta)$.

\begin{thm}
Let $(L_p)_A=d_A^*\partial_{+A}+*[*F_p,-]$ be the operator $\Omega^*(M,Ad\,P)$ induced by $Q_p(\eta,\eta)$. Suppose $A$ is a primitive Yang-Mills connection, then $\eta\in\Omega^1(M, Ad\,P)$ is a tangent vector on the space of primitive Yang-Mills connections if and only if $(L_p)_A\eta=0$.
\end{thm}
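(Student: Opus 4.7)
The plan is to derive the statement directly from the identification $Q_p(\eta,\eta) = \langle (L_p)_A\eta,\eta\rangle$ established in the preceding proposition. Following the convention used for the analogous Yang-Mills statement in \cite{Atiyah-Bott}, I interpret ``tangent vector on the space of primitive Yang-Mills connections at $A$'' in the formal (Zariski) sense: $\eta$ such that along the variation $A_t = A + t\eta$, the primitive Yang-Mills equation $d_A^* F_p(A)=0$ is preserved to first order in $t$. Equivalently, $\eta$ lies in the null space of the Hessian bilinear form, i.e.\ $Q_p(\eta,\xi)=0$ for all $\xi\in\Omega^1(M,Ad\,P)$.

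Given this interpretation, the proof reduces to two elementary steps. First, since the Hessian is symmetric by construction (as the second variation of a smooth functional), polarization of the identity $Q_p(\eta,\eta) = \langle (L_p)_A\eta,\eta\rangle$ from the previous proposition yields $Q_p(\eta,\xi) = \langle (L_p)_A\eta,\xi\rangle$ for all $\xi$. Second, by non-degeneracy of the inner product on $\Omega^1(M,Ad\,P)$, the vanishing of $\langle (L_p)_A\eta,\xi\rangle$ for every $\xi$ is equivalent to $(L_p)_A\eta = 0$. This settles both directions simultaneously.

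For a more concrete derivation mirroring the classical Yang-Mills calculation, one may instead linearize $d_A^*F_p(A)=0$ directly: the expansion $F_p(A_t) = F_p(A) + t\,\partial_{+A}\eta + O(t^2)$ from the first proposition of this subsection, together with the standard expansion $d_{A_t}^* = d_A^* + t\,\mathrm{ad}(\eta)^*$ from $d_{A_t}\alpha = d_A\alpha + t[\eta,\alpha]$, gives the linearization $\frac{d}{dt}\big|_{t=0} d_{A_t}^*F_p(A_t) = d_A^*\partial_{+A}\eta + \mathrm{ad}(\eta)^*F_p(A)$. The identification $\mathrm{ad}(\eta)^*F_p = *[*F_p,\eta]$ is precisely the Hodge/trace manipulation already carried out in the proof of the previous proposition (cyclic invariance of the trace plus $* = -*^{-1}$ on 1-forms), so the linearized equation is exactly $(L_p)_A\eta = 0$. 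I anticipate no serious obstacle; the only delicate point is the formal interpretation of the tangent space, which is handled exactly as in the Atiyah-Bott treatment of the Yang-Mills analog rather than through a full Banach-manifold/ellipticity-modulo-gauge argument.
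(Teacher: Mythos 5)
Your proposal is correct and, in its "more concrete derivation," coincides with the paper's own proof: the paper likewise interprets the tangent space formally and computes $\frac{\partial}{\partial t}\big(*d_{A_t}*F_p(A_t)\big)\big|_{t=0}=*[\eta,*F_p]+*d_A*(\partial_{+A}\eta)=-(L_p)_A\eta$, so tangency is equivalent to $(L_p)_A\eta=0$. The preliminary polarization/Hessian framing is just a repackaging of the same linearization (and tacitly uses the symmetry of $(L_p)_A$), so it adds nothing essential beyond the paper's argument.
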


\begin{proof}
Let $A_t=A+t\eta+o(t)$. Since $\frac{\partial}{\partial t}(d_{A_t})|_{t=0}=[\eta,-]$ and $\frac{\partial}{\partial t}\big(F_p(A_t)\big)|_{t=0}=\partial_{+A}\eta$, we have
$$
\frac{\partial}{\partial t}\big(*d_{A_t}*F_p(A_t)\big)|_{t=0}=*[\eta,*F_p(A)]+*d_A*(\partial_{+A}\eta)=-(L_p)_A\eta.
$$
So $A_t$ is a path on the space of primitive Yang-Mills connections if and only if $(L_p)_A\eta=0$.
\end{proof}

As we mentioned in Remark \ref{harmonic}, $\partial_{+A}^*=d_A^*$ when acting on primitive 2-forms. So $(L_p)_A$ can also be written as $\partial_{+A}^*\partial_{+A}+*[*F_p,-]$. On the other hand, $A_t=A+t\eta+o(t)$ are all gauge equivalent to $A$ if and only if $\eta$ is $d_A$-exact. As $d_A=\dpa$ when acting on 0-forms, the complement of $d_A$-exact forms can be chosen as $\partial_{A+}^*$-closed forms. Thus, the tangent space of primitive Yang-Mills connections up to gauge equivalence can be identified with the space of $\eta\in\Omega^1(M,Ad\,P)$ satisfying $(L_p)_A\eta=0$ and $\partial_{A+}^*\eta=0$. Or equivalently, $\partial_{A+}^*\eta=0$ and
$$
\partial_{+A}^*\partial_{+A}\eta+\partial_{+A}\partial_{A+}^*\eta+*[*F_p,\eta]=0.
$$
The above operator acting on $\eta$ is elliptic because $\eqref{primcomplex}$ is an elliptic complex. Therefore, the solution space of primitive Yang-Mills connections up to gauge equivalence is finite dimensional.

By the same arguments, we can prove similar properties for $\|\Phi\,\omega\|^2$:
\begin{itemize}
    \item  $A$ is a critical point of $\|\Phi\,\omega\|^2$ if and only if $d_A^*(\Phi\,\omega)=0$. Moreover,
    $$
    d_A^*(\Phi\,\omega)=-*d_A*(\Phi\,\omega)=-\frac{1}{(n-1)!}*d_A(\Phi\,\omega^{n-1})=-\frac{1}{(n-1)!}*[(d_A\Phi)\omega^{n-1}].
    $$
    So these equivalent statements hold if and only if $(d_A\Phi)\omega^{n-1}=0$. But since the map $\omega^{n-1}\!:\Omega^1\to\Omega^{2n-1}$ is an isomorphism, an equivalent condition for a connection $A$ to be a critical point of $\|\Phi\,\omega\|^2$ is $d_A\Phi=0\,$.
    \item The quadratic form defined by the Hessian of $\|\Phi\,\omega\|^2$ at a critical point is given by
    $$
    Q_{\Phi}(\eta,\eta)=\langle d_A^*(\omega\partial_{-A}\eta)+*[*(\Phi\,\omega),\eta],\eta \rangle.
    $$
    \item Let $(L_{\Phi})_A=d_A^*(\omega\partial_{-A})+*[*(\Phi\,\omega),-]$ be the operator acting on $\Omega^*(M,Ad\,P)$ induced by $Q_{\Phi}(\eta,\eta)$. Suppose $A$ is a critical point of $\|\Phi\,\omega\|^2$, then $\eta$ is a tangent vector on the space of the critical points of $\|\Phi\,\omega\|^2$ if and only if $(L_{\Phi})_A\eta=0$.
\end{itemize}

\begin{rmk}
However, the critical points of $\|\Phi\,\omega\|^2$ modulo gauge equivalence can not be expressed as the kernel of some elliptic operator, as $d_A\Phi=0$ is a weak condition. This makes the functional $\|F_p\|^2$ more appealing as compared to $\|\Phi\,\omega\|^2$.
\end{rmk}

To summarize, the conditions for the critical points of the three functionals -- $\|F\|^2$, $\|F_p\|^2$, and $\|\Phi\,\omega\|^2$ -- are given in Table \ref{Tab1}. These conditions immediately give the following proposition.

\begin{table}[t!]
\centering
\begin{tabular}{| c | c |}
\hline
    Functional & Critical point $A$ \\\hline
    $\|F\|^2$ \  (Yang-Mills)& $d_A^*F=0$ \\\hline
    $\|F_p\|^2$\ (primitive Yang-Mills) & $d_A^*F_p=0$ \\\hline
    $\|\Phi\,\omega\|^2$ & $d_A^*(\Phi\,\omega)=0$, or equivalently $d_A\Phi=0$. \\ \hline
\end{tabular}
\caption{Three functionals and the conditions for their critical points.}\label{Tab1}
\end{table}

\begin{prop}\label{2 of 3}
If $A$ is a critical point of two of the functionals among $\|F\|^2,\|F_p\|^2$ and $\|\Phi\,\omega\|^2$, it is also a critical point of the third one.
\end{prop}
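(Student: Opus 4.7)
The plan is to exploit the linearity of $d_A^*$ together with the Lefschetz decomposition $F = F_p + \Phi\,\omega$. The three critical-point conditions summarized in Table \ref{Tab1} can all be written as the vanishing of $d_A^*$ applied to a 2-form, and the three 2-forms in question satisfy an obvious linear relation.

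Concretely, I would begin by recording the identity
\begin{equation*}
d_A^* F \;=\; d_A^* F_p \;+\; d_A^*(\Phi\,\omega),
\end{equation*}
which follows immediately from $F = F_p + \Phi\,\omega$ and the $\mathbb{R}$-linearity of the formal adjoint $d_A^*$. Using Table \ref{Tab1}, the three critical-point conditions are precisely $d_A^* F = 0$, $d_A^* F_p = 0$, and $d_A^*(\Phi\,\omega) = 0$, so the displayed identity shows that the vanishing of any two of these quantities forces the vanishing of the third.

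There is essentially no obstacle: once one notes that $\|F\|^2 = \|F_p\|^2 + \|\Phi\,\omega\|^2$ (orthogonality of the Lefschetz components under the compatible metric) and that the Euler--Lagrange equations derived above are $d_A^*$ applied to each summand, the statement is immediate. If I wanted to avoid quoting Table \ref{Tab1}, I could instead derive the conclusion at the level of the functionals themselves: writing $\|F(A_t)\|^2 = \|F_p(A_t)\|^2 + \|\Phi(A_t)\,\omega\|^2$ and differentiating at $t=0$, the first-order variations add, so a critical point of any two of the three functionals is automatically a critical point of the third. This second viewpoint is just the integrated version of the $d_A^*$-identity above.
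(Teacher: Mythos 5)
Your argument is correct and is essentially the paper's own: the proposition is stated there as an immediate consequence of the critical-point conditions in Table \ref{Tab1}, i.e.\ of the linear identity $d_A^*F=d_A^*F_p+d_A^*(\Phi\,\omega)$ coming from the Lefschetz decomposition $F=F_p+\Phi\,\omega$. Your alternative phrasing via first-order variations of $\|F\|^2=\|F_p\|^2+\|\Phi\,\omega\|^2$ is just the integrated form of the same observation, so there is nothing to add.
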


In the following, we will explore the relations between the critical points of $\|F_p\|^2$ versus that of  $\|F\|^2$. 

\begin{ex}
When $A$ is a symplectically flat connection, it is Yang-Mills. Since $F_p=0$ in this case, it is a primitive Yang-Mills connection, hence also a critical point of $\|\Phi\,\omega\|^2$.
\end{ex}

\begin{ex}
A Hermitian Yang-Mills connection $A$ is a Yang-Mills connection. Its curvature can be written as $F_p^{1,1}+c\,I\omega$, where $F_p^{1,1}$ is a primitive $(1,1)$-form, $c$ is a constant and $I$ is the identity map on $Ad\,P$. So $\Phi=c\,I$ is $d_A$-closed, which implies $A$ is a critical point of $\|\Phi\,\omega\|^2$.  Therefore, it is also a primitive Yang-Mills connection. 
\end{ex}

More generally, we have the following statement.

\begin{lem}\label{YM is PYM}
Suppose $P$ is a principal $G$-bundle over a closed manifold $M$ with a connection $A$. Let $\Lambda$ be the dual Lefschetz operator and $\Delta_A=d_Ad_A^*+d_A^*d_A$. When the curvature $F$ satisfies $[\Lambda,\Delta_A]$F=0, $A$ is Yang-Mills implies that it is a primitive Yang-Mills connection. If in addition $G$ is an abelian group,  $A$ is a primitive Yang-Mills connection also implies that it is Yang-Mills.
\end{lem}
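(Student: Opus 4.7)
The plan is to reduce both implications to the single assertion $d_A\Phi=0$. Indeed, $d_A^*F = d_A^*F_p + d_A^*(\Phi\omega)$, and as noted in the bullet list preceding Proposition \ref{2 of 3}, the vanishing of $d_A^*(\Phi\omega)$ is equivalent to $d_A\Phi=0$. So once two of the three conditions $d_A^*F=0$, $d_A^*F_p=0$, $d_A\Phi=0$ are known, the third follows automatically. The key preliminary identity is $\Lambda F = n\Phi$, coming from $\Lambda F_p=0$ together with the Lefschetz relation $\Lambda(\Phi\omega)=n\Phi$; combined with Bianchi $d_AF=0$, this yields $\Delta_AF = d_Ad_A^*F$ and $\Delta_A\Lambda F = n\,d_A^*d_A\Phi$ (since $\Phi$ is a $0$-form).

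For the first direction, if $A$ is Yang-Mills then $d_A^*F=0$, so $\Delta_AF=0$. The hypothesis $[\Lambda,\Delta_A]F=0$ forces $n\,d_A^*d_A\Phi = \Delta_A\Lambda F = \Lambda\Delta_AF = 0$. Pairing with $\Phi$ in $L^2$ on the closed manifold $M$ gives $\|d_A\Phi\|^2=0$, so $d_A\Phi=0$, and Proposition \ref{2 of 3} then produces the primitive Yang-Mills condition.

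For the converse (with $G$ abelian), primitive Yang-Mills gives $d_A^*F=d_A^*(\Phi\omega)$ and $\Delta_AF = d_Ad_A^*(\Phi\omega)$. Applying $[\Lambda,\Delta_A]F=0$ and pairing with $\Phi$ in $L^2$,
\begin{align*}
\|d_A^*(\Phi\omega)\|^2 \;=\; \langle d_Ad_A^*(\Phi\omega),\,\omega\Phi\rangle \;=\; \langle \Lambda\Delta_AF,\,\Phi\rangle \;=\; \langle \Delta_A\Lambda F,\,\Phi\rangle \;=\; n\|d_A\Phi\|^2\,.
\end{align*}
The decisive step is a Hodge-star identification of $d_A^*(\Phi\omega)$ with $d_A\Phi$: using $*\omega = \omega^{n-1}/(n-1)!$ from the compatible metric, together with the almost-Hermitian formula $\alpha\wedge\omega^{n-1}/(n-1)! = *(J\alpha)$ for a $1$-form $\alpha$, one computes $d_A^*(\Phi\omega) = -*d_A*(\Phi\omega) = \pm J\,d_A\Phi$. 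Since $J$ is an isometry on $1$-forms, $\|d_A^*(\Phi\omega)\| = \|d_A\Phi\|$. The displayed equation then reads $\|d_A\Phi\|^2 = n\|d_A\Phi\|^2$, and since $n\geq 2$ we conclude $d_A\Phi=0$. Proposition \ref{2 of 3} again yields Yang-Mills.

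The main obstacle is the Hodge-star identification $d_A^*(\Phi\omega) = \pm J\,d_A\Phi$ used in the converse; this relies essentially on the metric-symplectic compatibility so that $J$ is available and acts as an isometry, producing the sharp norm equality that lets us collapse $\|d_A\Phi\|^2 = n\|d_A\Phi\|^2$ to $d_A\Phi=0$. The role of the abelian hypothesis is to keep the integration-by-parts manipulations clean: with $G$ abelian, $d_A$ acts as an ordinary differential on $\mathfrak g$-valued forms and the inner product with $\Phi$ is handled without additional curvature cross-terms, so the argument in the converse proceeds transparently.
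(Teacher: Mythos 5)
Your proof is correct, and while the first implication coincides with the paper's argument (Yang--Mills plus Bianchi gives $\Delta_A F=0$, the commutator hypothesis transfers this to $\Delta_A\Lambda F$, and pairing with $\Phi$ on the closed manifold yields $d_A\Phi=0$, then Proposition \ref{2 of 3}), your converse takes a genuinely different route. The paper, using that $G$ is abelian to treat $F$ as an ordinary $2$-form, passes to the $d$-harmonic representative $F+d\xi$, argues that its primitive part is $\partial_+$-harmonic, invokes uniqueness of harmonic representatives for the elliptic complex \eqref{primcomplex} to force $\partial_+\xi=0$, hence $d\xi=f\omega$ with $f$ constant, and kills $f$ using $[\omega]\neq 0$; dimension $4$ requires a separate check of $\partial_+\partial_-$-closedness. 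You instead compute $\|d_A^*(\Phi\,\omega)\|^2$ twice: once through $d_A^*F_p=0$, Bianchi, and the hypothesis $[\Lambda,\Delta_A]F=0$, giving $n\|d_A\Phi\|^2$ (here the normalization matters: with $\Lambda=L^*$ one indeed has $\Lambda(\Phi\,\omega)=n\Phi$, not $\Phi$ as the paper loosely writes), and once through the pointwise Weil identity $*\bigl(\alpha\wedge\tfrac{\omega^{n-1}}{(n-1)!}\bigr)=\pm J\alpha$, giving $\|d_A\Phi\|^2$; since $2n\geq 4$ is the standing assumption of this subsection, $n\geq 2$ forces $d_A\Phi=0$ and Proposition \ref{2 of 3} finishes. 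This buys two things: it avoids the Hodge-theoretic detour and the separate $\dim M=4$ case, and --- contrary to your closing remark --- it never actually uses the abelian hypothesis, since with an $\mathrm{Ad}$-invariant inner product every integration by parts and the pointwise isometry argument work verbatim for bundle-valued forms; so your argument in fact proves the converse for arbitrary structure group under the hypothesis $[\Lambda,\Delta_A]F=0$, a slightly stronger statement than the paper's, whereas the paper's route genuinely needs abelian $G$ to reduce to ordinary Hodge theory. The only caveats worth recording are the dependence on the normalization $\Lambda(\Phi\,\omega)=n\Phi$ and on $n\geq 2$ (in dimension $2$ the two computations coincide and give no information, consistent with the statement failing there).
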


\begin{proof}
When $A$ is Yang-Mills, $\Delta_A F=0$. Write $F=F_p+\Phi\,\omega$ by the Lefschetz decomposition. Then we have
$$
d_A^*d_A\Phi=\Delta_A\Phi=\Delta_A\Lambda F=\Lambda\Delta_A F=0.
$$
It follows that $d_A\Phi=0$. By Proposition \ref{2 of 3}, $A$ is also a primitive Yang-Mills connection.

Conversely, when $A$ is a primitive Yang-Mills connection, $F_p$ is a harmonic form of $\partial_{+A}$. If in addition $G$ is abelian, $F$ and $F_p$ are invariant under each fiber. So they can be treated simply as 2-forms over $M$. In this case $d_A$ and $\partial_{+A}$ are just $d$ and $\partial_+$, respectively. Then we can say that $F_p\in P^2(M)$ is a harmonic form of $\partial_+$.

On the other hand, since $F\in\Omega^2(M)$ is $d$-closed, there exists some $\xi\in\Omega^1(M)$ such that $F+d\xi$ is a harmonic form of $d$. By the discussion of the previous part, we have $F_p+\partial_+\xi=\Pi(F+d\xi)$ is also $\partial_{+A}$-harmonic.

When $\dim M\geq6$, by the ellipticity of \eqref{primcomplex}, the $\partial_{+A}$-harmonic forms, $F_p$ and $F_p+\partial_+\xi$, must be identical. This implies that $d\xi=f\om$ for some function $f\in\Om^0(M)$. Then $df\wedge\om=d^2\xi=0$ and $f$ must be a constant. Since $[\om]\in H^2(M)$ is non-trivial for closed manifolds, $f=0$. Thus, $F$ is harmonic and $A$ is Yang-Mills.

When $\dim M=4$, to use the ellipticity of \ref{primcomplex}, we need to show that $F_p$ and $F_p+\partial_+\xi$ are $\partial_+\partial_-$ and $\partial_+^*$-closed, where the latter closeness has been proved. As $dF=0$, $\om(\dpm F_p+\dpp \Phi)=0$. So $\dpp\dpm F_p=-\dpp^2\Phi=0$. Similarly $\dpp\dpm (F_p+\partial_+\xi)=0$. Then applying the same arguments as above, we have $F_p=F_p+\partial_+\xi$ and it leads to that $A$ is Yang-Mills.
\end{proof}

\begin{cor}
For a principal bundle over a K\"ahler manifold, we have the Hodge decomposition $d_A=\partial_A+\bar{\partial}_A$. If the curvature of a Yang-Mills connection satisfies $\left[(\partial_A^*)^2-(\bar{\partial}_A^*)^2\right]F=0$, then this connection is a primitive Yang-Mills connection.
\end{cor}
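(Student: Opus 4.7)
The plan is to derive this corollary directly from Lemma~\ref{YM is PYM} by showing that the given hypothesis on $F$ is precisely what forces $[\Lambda,\Delta_A]F = 0$.

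First, I would record the twisted K\"ahler identities on the K\"ahler manifold $M$: for the type decomposition $d_A = \partial_A + \bar{\partial}_A$, one has $[\Lambda,\partial_A] = i\bar{\partial}_A^*$ and $[\Lambda,\bar{\partial}_A] = -i\partial_A^*$, together with $[L,\partial_A] = [L,\bar{\partial}_A] = 0$. The latter pair follow from $d\omega = 0$ and a bidegree check, with the zeroth-order contributions $A^{1,0}\wedge$ and $A^{0,1}\wedge$ commuting with $L = \omega\wedge(-)$. Taking adjoints of the vanishing commutators with $L$ gives $[\Lambda,\partial_A^*] = [\Lambda,\bar{\partial}_A^*] = 0$, so $[\Lambda,d_A^*] = 0$ and $[\Lambda,d_A] = i(\bar{\partial}_A^* - \partial_A^*)$.

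Second, I would expand $[\Lambda,\Delta_A]$ via the Leibniz rule. Because $[\Lambda,d_A^*] = 0$, this reduces to $[\Lambda,\Delta_A] = [\Lambda,d_A]\,d_A^* + d_A^*\,[\Lambda,d_A]$. Substituting $d_A^* = \partial_A^* + \bar{\partial}_A^*$ and $[\Lambda,d_A] = i(\bar{\partial}_A^* - \partial_A^*)$, the cross terms $\bar{\partial}_A^*\partial_A^*$ and $\partial_A^*\bar{\partial}_A^*$ cancel against each other, leaving $[\Lambda,\Delta_A] = 2i\bigl[(\bar{\partial}_A^*)^2 - (\partial_A^*)^2\bigr]$. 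Thus the hypothesis $[(\partial_A^*)^2 - (\bar{\partial}_A^*)^2]F = 0$ is equivalent to $[\Lambda,\Delta_A]F = 0$, and Lemma~\ref{YM is PYM} immediately yields that the Yang-Mills connection $A$ is primitive Yang-Mills.

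The only delicate step is establishing the twisted K\"ahler identities at the start. These are standard (they reduce to the untwisted identities once one checks that the connection contributions $A^{1,0}\wedge$ and $A^{0,1}\wedge$ commute with $L$), but they were not explicitly set up earlier in the paper and so deserve a brief justification. With that in hand, the corollary is a short algebraic consequence of the preceding lemma.
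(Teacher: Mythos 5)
Your proof is correct and takes essentially the same route as the paper: both verify $[\Lambda,d_A^*]=0$ by adjointness to $[L,d_A]=0$, apply the twisted K\"ahler identity $[\Lambda,d_A]=i(\bar{\partial}_A^*-\partial_A^*)$, and reduce $[\Lambda,\Delta_A]$ to a multiple of $(\partial_A^*)^2-(\bar{\partial}_A^*)^2$ so that Lemma \ref{YM is PYM} applies (your Leibniz expansion versus the paper's graded Jacobi identity is only a bookkeeping difference). The overall sign you get for $[\Lambda,\Delta_A]$ is opposite to the one printed in the paper, but this is convention-dependent and immaterial since the hypothesis is a vanishing condition.
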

\begin{proof}
$$
[\Lambda,\Delta_A]=[\Lambda,[d_A,d_A^*]]=[[\Lambda,d_A],d_A^*]+[d_A,[\Lambda,d_A^*]].
$$
The second term is 0 because for any $\xi,\eta\in\Omega^*(M,Ad\,P)$, we have
$$
\langle[\Lambda,d_A^*]\xi,\eta\rangle=\langle\xi,[d_A,L]\eta\rangle=0.
$$
For the first term, by the K\"ahler identities, we have
$$
[[\Lambda,d_A],d_A^*]=[i(\bar{\partial}_A^*-\partial_A^*),\partial_A^*+\bar{\partial}_A^*]=2i[(\partial_A^*)^2-(\bar{\partial}_A^*)^2].
$$
So by assumption it vanishes when acting on $F$. In this case, a Yang-Mills connection is a primitive Yang-Mills connection.
\end{proof}

If in addition the structure group is abelian, $\partial_A^*$ and $\bar{\partial}_A^*$ acting on $F\in\Om^2(M,Ad\,P)$ can be viewed as $\partial^*$ and $\bar{\partial}^*$ acting on $F\in\Om^2(M)$ respectively. As $(\partial^*)^2=(\bar{\partial}^*)^2=0$, we have the following statement.

\begin{cor}
If $P$ is a principal bundle over a K\"ahler manifold with an abelian structure group, then the Yang-Mills connections and primitive Yang-Mills connections are the same.
\end{cor}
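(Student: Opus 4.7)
The plan is to deduce the corollary by combining the two previous results, exploiting that for an abelian structure group the twisted differentials on $\mathrm{Ad}\,P$-valued forms reduce to the ordinary (untwisted) differentials on forms with values in a trivial bundle.

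First, for the forward direction (Yang-Mills $\Rightarrow$ primitive Yang-Mills), I would invoke the preceding corollary, whose hypothesis is that $\bigl[(\partial_A^*)^2-(\bar{\partial}_A^*)^2\bigr]F=0$. Since $G$ is abelian, the adjoint action is trivial, $\mathrm{Ad}\,P$ is a trivial Lie-algebra bundle, and $d_A=d$ acting on $\mathrm{Ad}\,P$-valued forms. Consequently, under the K\"ahler decomposition, $\partial_A^*$ and $\bar{\partial}_A^*$ coincide with the ordinary $\partial^*$ and $\bar{\partial}^*$ applied componentwise to $F\in\Omega^2(M,\mathrm{Ad}\,P)$, viewed as an $\mathfrak{g}$-valued $2$-form. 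Since $(\partial^*)^2=0$ and $(\bar{\partial}^*)^2=0$ on any form, the hypothesis holds automatically, and so any Yang-Mills connection $A$ is primitive Yang-Mills.

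For the converse direction (primitive Yang-Mills $\Rightarrow$ Yang-Mills), I would appeal directly to Lemma \ref{YM is PYM}. That lemma's converse already assumes only that $G$ is abelian and $M$ is closed (which is the standing hypothesis here, as K\"ahler manifolds in this section are closed), and its proof proceeds by comparing the $\partial_{+}$-harmonic representative $F_p$ against the primitive part of the $d$-harmonic representative of $[F]\in H^2(M)$ via the ellipticity of the complex \eqref{primcomplex}. No further K\"ahler input is needed for this implication. Hence every primitive Yang-Mills connection is Yang-Mills.

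Combining the two implications yields the equality of the two sets of connections, proving the corollary. There is no substantive obstacle here; the only point that deserves care is the observation that abelian-ness of $G$ collapses the twisted operators $\partial_A^*,\bar{\partial}_A^*,d_A,\partial_{+A}$ to their untwisted counterparts acting componentwise, which is what allows both the previous corollary and Lemma \ref{YM is PYM} to be applied cleanly without any extra curvature hypothesis.
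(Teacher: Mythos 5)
Your proof is correct and follows essentially the paper's own route: the forward direction via the preceding corollary after observing that abelian-ness lets $(\partial_A^*)^2F$ and $(\bar\partial_A^*)^2F$ be read as $(\partial^*)^2F=(\bar\partial^*)^2F=0$, and the converse via Lemma \ref{YM is PYM}. One small citation point: the converse half of Lemma \ref{YM is PYM} is stated under the standing hypothesis $[\Lambda,\Delta_A]F=0$ together with abelian-ness, not under abelian-ness and closedness alone; in the K\"ahler abelian setting this hypothesis is automatic, since $[\Lambda,\Delta_A]F=2i\bigl[(\partial_A^*)^2-(\bar\partial_A^*)^2\bigr]F=0$ by the computation in the preceding corollary, so you should verify it (as your forward direction in fact already does) rather than assert that no curvature hypothesis or K\"ahler input is needed.
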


In the following, we give two examples that make clear that the critical points of the Yang-Mills and primitive Yang-Mills functionals are in general different.  The first demonstrate this for abelian solutions on $T^4$.  The second shows that the BPST  Yang-Mills instanton solutions on $\mathbb{R}^4$ \cite{BPST} is not a primitive Yang-Mills solution with respect to the standard symplectic structure.

\begin{ex}\label{T^4}
Let $T^4=\mathbb{R}^4/2\pi\mathbb{Z}^4$ be a 4-torus, and $\om=dx_1\wedge dx_2+dx_3\wedge dx_4$ be its symplectic form. Take the Riemannian metric to be 
$$
g=dx_1^2+dx_2^2+\frac{1}{f}dx_3^2+fdx_4^2
$$
where $f=\dfrac{3+2\sin 2x_2\cos x_3}{1-\frac{1}{2}\sin 2x_2\cos x_3}$. Construct a circle bundle $X=\mathbb{R}^5/\sim$ over $T^4$ by identifying
\begin{align*}
x_i\sim x_i+2\pi n_i \,,\qquad 
y\sim y+2\pi n_5-n_2x_3\,
\end{align*}
for $i=1, \ldots, 4,$ and $n_i, n_5\in\mathbb{Z}$, Set the connection as
$$
A=dy+\frac{1}{2\pi}x_2dx_3+\frac{1}{4\pi}\sin 2x_2\sin x_3 dx_1.
$$
Then the curvature
$$
F=-\frac{1}{2\pi}\cos 2x_2\sin x_3 dx_1dx_2+\frac{1}{2\pi}(1-\frac{1}{2}\sin 2x_2\cos x_3)dx_1dx_3
$$
can be treated as a 2-form over $M$. As
\begin{align*}
    d*F &= \frac{1}{2\pi}d[-\cos 2x_2\sin x_3 dx_3dx_4-(1-\frac{1}{2}\sin 2x_2\cos x_3)fdx_2dx_4] \\
    &= \frac{1}{2\pi}d[-\cos 2x_2\sin x_3 dx_3dx_4-(3+2\sin 2x_2\cos x_3)dx_2dx_4] \\
    &= 0,
\end{align*}
$F$ is $d$-harmonic so that $A$ is Yang-Mills. However, under the Lefschetz decomposition $F=F_p+\Phi\,\om$, we have $\Phi=-\frac{1}{4\pi}\cos 2x_2\sin x_3$, which is not $d$-closed. This implies that $A$ is not a primitive Yang-Mills connection.

On the other hand, by the discussion in the last paragraph of the proof of Lemma \ref{YM is PYM}, $F_p\in P^2(M)$ is $\dpp\dpm$-closed. By the ellipticity of \eqref{primcomplex}, there exists some $\xi\in P^1(M)$ such that $F_p+\dpp\xi$ is $\partial_+^*$-closed. Consider the connection $A'=A+\xi$. Its curvature is $F'=F+d\xi$ and the primitive part is exactly $F'_p=F_p+\dpp\xi$. Hence, $A'$ is a primitive Yang-Mills connection. But by $\partial_+^*F_p\neq0$, $\dpp\xi$ can not be zero. Then $d\xi\neq0$ and $F'=F+d\xi$ is not $d$-harmonic. Thus, $A'$ is not Yang-Mills.
\end{ex}

\begin{ex}
Let $P$ be an $SU(2)$-bundle over $M=\mathbb{R}^4$ and $\omega=dx^1\wedge dx^2+dx^3\wedge dx^4$. Set
$$
A=\frac{\eta^a_{\mu\nu} x^{\nu}}{|x|^2+1}T_a dx^{\mu}
$$
be a BPST instanton \cite{BPST} (see also the review article \cite{BVV}). Here $\{ T_1,T_2,T_3 \}$ is a basis of $\mathfrak{su}(2)$ satisfying $[T_a,T_b]=\epsilon^{abc}T_c$. For example, we can set $T_a=\frac{1}{2i}\sigma_a$ for the Pauli matrices $\sigma_1,\sigma_2,\sigma_3$. $\eta^a_{\mu\nu}$ is the
't Hooft symbol.
$$
\eta^a_{\mu\nu}=
\begin{cases}
\epsilon^{a\mu\nu}, & \mu,\nu=1,2,3 \\
-\delta^{a\nu}, & \mu=4 \\
\delta^{a\mu}, & \nu=4 \\
0, & \mu=\nu=4
\end{cases}.
$$
The curvature is
$$
F=\frac{4}{(|x|^2+1)^2}[-T_1(dx^1\wedge dx^4+dx^2\wedge dx^3)+T_2(dx^1\wedge dx^3+dx^2\wedge dx^4)-T_3(dx^1\wedge dx^2+dx^3\wedge dx^4)].
$$
$F$ is self-adjoint so $A$ is a Yang-Mills connection. On the other hand, we have
$$
\Phi=-\frac{4}{(|x|^2+1)^2}T_3.
$$
Since $\frac{4}{(|x|^2+1)^2}$ is not $d$-closed, $d\Phi$ has a non-trivial $T_3$-component. But $[T_a,T_3]$ has no $T_3$-component for any $a$, neither is $[A,\Phi]$. Therefore, $d_A\Phi$ has a non-trivial $T_3$-component and cannot be 0. $A$ is not a critical point of $\|\Phi\,\omega\|^2$, so not a primitive Yang-Mills connection, either.
\end{ex}
\begin{rmk}
The construction of BPST instanton makes use of the homotopy of $S^3\rightarrow S^3$, mapping from the boundary of $\mathbb{R}^4 \rightarrow SU(2)=S^3$ \cite{BVV}. However, 
the presence of $\om$ in the  primitive Yang-Mills conditions breaks the symmetry of $\mathbb{R}^4$ coordinates by separating them into two groupings of $\{x^1, x^2\}$ and $\{x^3, x^4\}$. 
\end{rmk}

\subsection{Cone Yang-Mills functional}

Suppose $(M^{2n},\omega)$ is a symplectic manifold and $[\omega]\in H^2(M, \mathbb{Z})$. We can construct a circle bundle $\pi:X\to M$ whose Euler class is $\omega$. Let $P$ be a principal bundle over $M$ with a symplectically flat connection $A$. Then $A-\theta\,\Phi$ is a flat connection on the pullback bundle $\pi^*P$ \cite{TZ}*{Corollary 3.9}. Given an inner product structure over the associated vector bundle $Ad\,P$ compatible with $\omega$, we can extend it to an inner product over $\pi^*Ad\,P$ and make the global angular form $\theta\in\Omega^1(X)$ orthogonal to $\pi^*\Omega^*(M)$. These two inner products induce Yang-Mills functionals on $Ad\,P$ and $\pi^*Ad\,P$ respectively, and they have the following relationship.

$$
S_{Y\!M}(A-\theta\Phi)=\|F(A)-\omega\Phi-\theta d_A\Phi\|^2_{\pi^*Ad\,P}=\|F(A)-\omega\Phi\|^2_{Ad\,P}+\|\theta d_A\Phi\|^2_{Ad\,P}.
$$

Observe that the right hand side is the cone Yang-Mills functional defined in Definition \ref{def of cYM} if we replace $-\Phi$ by a function $B\in\Omega^0(M,Ad\,P)$, and is well defined even if $[\omega]$ is not an integral class. This suggests the consideration of the mapping cone of differential forms $\Om^*(M)[\theta]=\Om^*(M)\oplus\,\theta\,\Om^*(M)$, which is quasi-isomorphic to both $\Om^*(X)$ and the space of primitive forms  \cite{TT}. Here, $\theta$ has degree 1 and satisfies $d\theta=\om$, and further, $[\omega]$ is allowed to be non-integral. For a connection $A$ on a principal bundle $P$ over $M$, its covariant derivative $d_A$ induces an operator $d_A-\theta\Phi$ over $\Om^*(M,Ad\,P)[\theta]$. Then $(d_A-\theta\Phi)^2=0$ if and only if $A$ is a symplectically flat connection \cite{TZ}*{Proposition 3.8}.

We can also extend the inner product $\langle-,-\rangle$ on $\Om^*(M,Ad\,P)$ to $\langle-,-\rangle_{\mathcal{C}}$ on $\Om^*(M,Ad\,P)[\theta]$, by setting
$$
\langle \xi_1+\theta\eta_1, \xi_2+\theta\eta_2\rangle_{\mathcal{C}}=\langle\xi_1,\xi_2\rangle+\langle\eta_1,\eta_2\rangle.
$$
It induces an operator
\begin{align*}
*_{\mathcal{C}}:\Om^*(M,Ad\,P)[\theta]~&\to~~\Om^*(M,Ad\,P)[\theta]\\ \xi+\theta\eta~\qquad&\mapsto~~ (-1)^{|\xi|}\theta *\xi+*\eta
\end{align*}
satisfying
$$
\langle \xi_1+\theta\eta_1, \xi_2+\theta\eta_2\rangle_{\mathcal{C}}=\int_M\frac{\partial}{\partial\theta}[(\xi_1+\theta\eta_1)\wedge *_{\mathcal{C}}(\xi_2+\theta\eta_2)]
$$
where $\frac{\partial}{\partial\theta}(\theta\alpha)=\alpha$ for any $\alpha\in\Om^*(M,Ad\,P)$.

Given a connection $A$ and a function $B\in\Om^0(M,Ad\,P)$, the curvature of the operator $d_A+\theta B$ is $\tilde{F}=(d_A+\theta B)^2=F(A)+\om B-\theta d_A B$. It suggests the following cone Yang-Mills functional for this curvature
$$\|\tilde{F}\|_{\mathcal{C}}^2=\|F(A)+\om B\|^2+\|d_AB\|^2.$$
The zero points of the functional  satisfies
$$
F=-\om B,\quad d_AB=0,
$$
which identifies $A$ as a symplectically flat connection. 

A straightforward calculation shows that $A+\theta B$ is a critical point of $\|\tilde{F}\|_{\mathcal{C}}^2$ if and only if
$$
\begin{cases}
d_A^*(F_A+\om B)-[d_AB,B] = 0, \\
n(\Lambda F_A+B)+d_A^*d_AB =0.
\end{cases}
$$

We may ask for what connection $A$ there is some $B$ such that $A+\theta B$ is a critical point of the cone Yang-Mills functional. For a connection which is both Yang-Mills and primitive Yang-Mills, $B=-\Phi=-\Lambda F_A$ is an obvious solution. However, if only one of the condition above holds, there may not be solutions.

\begin{prop}
Suppose the structure group is abelian. If a connection $A$ is Yang-Mills but not  primitive Yang-Mills, or is primitive Yang-Mills but not Yang-Mills, then there is no $B$ such that $A+\theta B$ is a critical point of the cone Yang-Mills functional.
\end{prop}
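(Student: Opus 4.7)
The plan is to use the abelian hypothesis to collapse the critical point equations into a pair of essentially scalar linear equations, and then show in each case that being a critical point would force the hypothesis's missing Yang--Mills property, producing a contradiction. Since $G$ is abelian the bracket $[d_A B, B]$ vanishes identically and $d_A$ acts as the ordinary exterior derivative $d$ on sections of $\mathrm{Ad}\,P$, so the critical point system reduces to
\[
d^{*}(F+\om B)=0, \qquad n(\Phi+B)+d^{*}dB=0,
\]
where $F=F_p+\Phi\,\om$ is the Lefschetz decomposition and $\Phi=\Lambda F$. The key auxiliary fact, already established above in the analysis of $\|\Phi\,\om\|^2$, is that for any $0$-form $X$ one has $d^{*}(X\om)=0$ if and only if $dX=0$, via the isomorphism $\om^{n-1}:\Om^1\to\Om^{2n-1}$.

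In the first case ($A$ Yang--Mills but not primitive Yang--Mills), $d^{*}F=0$, so the first equation gives $d^{*}(\om B)=0$ and hence $dB=0$. The second equation then collapses to $B=-\Phi$, so $d\Phi=-dB=0$. By the auxiliary fact this yields $d^{*}(\Phi\om)=0$, and therefore $d^{*}F_p=d^{*}F-d^{*}(\Phi\om)=0$, making $A$ primitive Yang--Mills and contradicting the hypothesis.

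In the second case ($A$ primitive Yang--Mills but not Yang--Mills), $d^{*}F_p=0$ while $d\Phi\neq 0$. The first equation reduces to $d^{*}\!\big((\Phi+B)\om\big)=0$, so setting $C:=\Phi+B$ one has $dC=0$ and $dB=-d\Phi$. The second equation then reads $nC=d^{*}d\Phi$. Applying $d$ gives $d\,d^{*}d\Phi=0$, and pairing against $d\Phi$ on the (implicitly compact) manifold $M$ produces $\|d^{*}d\Phi\|^2=\langle d\,d^{*}d\Phi,d\Phi\rangle=0$, so $\Delta\Phi=d^{*}d\Phi=0$; on a connected compact manifold a harmonic function is constant, forcing $d\Phi=0$ and contradicting the assumption. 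The only non-algebraic step, and the one possible pitfall, is this Hodge-theoretic upgrade from $d\,d^{*}d\Phi=0$ to $d^{*}d\Phi=0$, which is where compactness of $M$ (inherent in the $L^2$ definition of the functional) is used.
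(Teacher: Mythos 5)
Your proposal is correct and follows essentially the same route as the paper: in the abelian case the critical-point system reduces to $d^*(F+\om B)=0$ and $n(\Lambda F + B)+d^*dB=0$, the identity $d^*(X\om)=0 \Leftrightarrow dX=0$ (via the isomorphism $\om^{n-1}:\Om^1\to\Om^{2n-1}$) converts each case into statements about $dB$ and $d(\Phi+B)$, and an integration-by-parts argument on the closed manifold forces $d\Phi=0$, contradicting the hypothesis. Your minor variations --- phrasing the Case 1 contradiction as ``$A$ would be primitive Yang--Mills'' rather than invoking the two-out-of-three proposition, and in Case 2 pairing $d\,d^*d\Phi$ with $d\Phi$ instead of noting that the only $d^*$-exact constant is zero --- are equivalent to the paper's steps.
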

\begin{proof}
As the structure group is abelian, the condition that $A+\theta B$ is a critical point of the cone Yang-Mills functional $\|\tilde{F}\|_{\mathcal{C}}^2$ becomes
\begin{align}\label{critical pt of abelian cone YM}
\begin{cases}
d^*(F_A+\om B)= 0, \\
n(\Lambda F_A+B)+d^*dB =0.
\end{cases}
\end{align}

Write $F_A=F_p+\Phi\om$ by the Lefschetz decomposition.
In both cases $d\Phi\neq 0$ because of Proposition \ref{2 of 3}.

When $A$ is Yang-Mills, the first equation of \eqref{critical pt of abelian cone YM} is equivalent to $d*(\om B)=\frac{\om^{n-1}}{(n-1)!} dB=0$. So $dB=0$ and $B$ is a constant. Then the second equation implies that $\Phi=\Lambda F_A$ is a constant, which is contradicted to $d\Phi\neq 0$. Thus, such $B$ does not exist.

When $A$ is primitive Yang-Mills, $d^*$ acting on $F_p$ is zero, the first equation of \eqref{critical pt of abelian cone YM} is equivalent to $d*(\om\, \Phi+\om\ B)=\frac{\om^{n-1}}{(n-1)!} d(\Phi+B)=0$. Thus, $\Phi+B=\Lambda F_A+B$ must be a constant then so is $d^*dB$. Since the only $d^*$-exact constant is zero, $d^*dB$ must vanish. It follows that $B=-\Phi$ and $dB=0$. This contradicts $d\Phi\neq 0$, so no such $B$ exists either.
\end{proof}

On the other hand, it is easy to show that $B$ is unique for abelian structure group.

\begin{prop}
Suppose the structure group is abelian. For each connection $A$ there is at most one $B$ such that $A+\theta B$ is a critical point of the cone Yang-Mills functional.
\end{prop}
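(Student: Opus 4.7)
The plan is to subtract the two critical-point equations and derive that the difference must vanish, using only the abelianness of the structure group and the compactness/standard integration by parts on $M$.

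Suppose $A+\theta B_1$ and $A+\theta B_2$ are both critical points of $S_{cYM}$. Since $F_A$ is the same for both, writing the system \eqref{critical pt of abelian cone YM} for each and subtracting, one obtains, with $C:=B_1-B_2\in\Omega^0(M,Ad\,P)$,
\begin{align*}
d^*(\om\,C)&=0,\\
n\,C+d^*dC&=0.
\end{align*}

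The cleanest way to finish is to pair the second equation with $C$ using the inner product on $\Om^0(M,Ad\,P)$. Since $d^*$ is the adjoint of $d$, this yields $n\|C\|^2+\|dC\|^2=0$, which forces $C=0$ immediately. (Alternatively, one can first use the first subtracted equation together with the identity $d^*(\om\,C)=-\tfrac{1}{(n-1)!}*[(dC)\om^{n-1}]$ — already worked out earlier in the paper for the critical points of $\|\Phi\,\om\|^2$ — and the fact that $\om^{n-1}\!:\Om^1\to\Om^{2n-1}$ is an isomorphism, to conclude $dC=0$, after which the second equation reads $nC=0$.) Either route needs essentially no nontrivial input beyond what has already been established in the subsection.

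The only thing that needs a moment's care is making sure one is entitled to integrate by parts, i.e.\ that the relevant integrals converge and there are no boundary terms. This is automatic on the closed manifolds used throughout this section (an analogous compactness hypothesis is implicit in \textbf{Lemma \ref{YM is PYM}}). With that in place there is no real obstacle — the statement is a uniqueness result coming from the positivity of the operator $n+d^*d$ on $\Om^0(M,Ad\,P)$, and both coefficients $n$ and the Laplacian are manifestly nonnegative when paired with $C$. I would therefore present the proof as a single short paragraph consisting of the subtraction step, the pairing with $C$, and the conclusion $C=0$.
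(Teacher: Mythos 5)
Your proof is correct, and your main route is genuinely (if mildly) different from the paper's. The paper uses both subtracted equations: from the first, $d^*(\om B_1)=d^*(\om B_2)$ together with $*(\om B)=\tfrac{1}{(n-1)!}\om^{n-1}B$ and the Lefschetz isomorphism $\om^{n-1}:\Om^1\to\Om^{2n-1}$ gives $dB_1=dB_2$ pointwise, and then the subtracted second equation reads $n(B_1-B_2)=d^*d(B_2-B_1)=0$, so no integration over $M$ is needed at the final step; this is exactly the alternative you sketch in parentheses. Your primary route instead discards the first equation altogether and pairs the subtracted second equation $nC+d^*dC=0$ with $C$, obtaining $n\|C\|^2+\|dC\|^2=0$. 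What this buys is brevity and the sharper observation that uniqueness of $B$ is forced by the second Euler--Lagrange equation alone, via positivity of $n+d^*d$ on $\Om^0(M,Ad\,P)$; what it costs is the global hypothesis that $M$ is closed (or that one may integrate by parts without boundary terms), so that $\langle d^*dC,C\rangle=\|dC\|^2$. As you note, that hypothesis is already implicit throughout the subsection — the functional, the adjoint $d^*$, and the derivation of \eqref{critical pt of abelian cone YM} all presuppose it (indeed the preceding nonexistence proposition's step ``the only $d^*$-exact constant is zero'' uses the same fact) — so the extra assumption is not a real restriction, though the paper's pointwise argument is marginally more economical in that respect.
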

\begin{proof}
Suppose both $A+\theta B_1$ and $A+\theta B_2$ are critical points of $\|\tilde{F}\|_{\mathcal{C}}^2$. Then $(A,B_1)$ and $(A,B_2)$ both need to satisfy \eqref{critical pt of abelian cone YM}. By the first equation of \eqref{critical pt of abelian cone YM} we have
$$
\frac{\om^{n-1}}{(n-1)!}dB_1=d*(\om B_1)=d*(\om B_2)=\frac{\om^{n-1}}{(n-1)!}dB_2.
$$
It follows that $dB_1=dB_2$. Then the second equation implies 
$$
n(B_1-B_2)=d^*d(B_2-B_1)=0.
$$
Hence, $B_1=B_2$.
\end{proof}

\begin{rmk}
We comment here on the generalization of the above functionals to those with zeroes being $\zeta$-flat connections. For any closed 2-form $\zeta\in\Om^2(M)$, an inner product structure on $\Om^*(M,Ad\,P)$ induces the decomposition of the curvature
$$
F=F_{\perp}+\Phi\zeta,
$$
where $F_{\perp}$ is orthogonal to $\zeta$. Thus, the Yang-Mills functional can also be decomposed as 
\begin{align}
\|F\|^2=\|F_{\perp}\|^2+\|\Phi\,\zeta\|^2,
\end{align}
and $\zeta$-flat connections are exactly the zero points of $\|F_{\perp}\|^2$ when $\dim M\geq 4$.

Similarly, if we set $d\theta=\zeta$ for the cone $\Om^*(M)[\theta]$, the functional $\|\tilde{F}\|_{\mathcal{C}}^2$ will become \begin{align}\|F_A+\zeta B\|^2+\|d_AB\|^2.
\end{align}
Its zero points $A+\theta B$ are then identifies with the $\zeta$-flat connections $A$.

However, unlike for a compatible symplectic structure $\om$, $*\,\zeta\neq\frac{\zeta^{n-1}}{(n-1)!}$ generally.  Hence, the properties described above do not generalize straightforwardly to the $\zeta$-flat case.
\end{rmk}



\section{Discussion: flows and characteristic classes}

In previous section, we introduced new Yang-Mills type functionals and explicitly showed that their Euler-Lagrange equations which specify the critical points can have different solutions.  It would be interesting to study the moduli space of solutions modulo gauge equivalence and identify any structures.  

It is also natural to consider how the critical points can be connected by gradient flows of the functionals.  Analogous to the Yang-Mills gradient flow, first studied in two dimension by Atiyah and Bott \cite{Atiyah-Bott}, we can write down a gradient flow for each functional that we have introduced.  As an example, take the primitive Yang-Mills functional of Definition \ref{dpYM}.   It gives what we can call the primitive Yang-Mills flow:
\begin{align}\label{pYMflow}
\dfrac{\partial A}{\partial t} = - d_A^* F_p\,.    
\end{align}
We expect this flow to share similar properties of the Yang-Mills flow.  The flow can be used to link critical points of the primitive Yang-Mills functional.  But additionally, we can also consider the flow of Yang-Mills solutions that are not primitive Yang-Mills.  The primitive Yang-Mills flow   \eqref{pYMflow} could therefore provide a link between critical points of the Yang-Mills functional and that of the primitive Yang-Mills.

Our motivation in writing down the new functionals in Section 3 was that the zeroes of the functionals correspond to symplectically-flat or more generally $\zeta$-flat connections.  If we remove this criterion, there are other functionals that are natural to write down.  A concrete example comes from the Chern-Simons characteristic forms.  We will describe how we can obtain such a functional for symplectically flat connections.  

Recall that a symplectically flat connection $A$ can be associated to a cone-flat connection $A+\theta\, B$ where $d\theta=\om$ and $F(A) = - B\,\omega$.  When $\om$ is integral class, cone-flat connections are just flat connections on the prequantum circle bundle whose Euler class is $\omega$ and $\theta$ is a global angular form.  We consider the Chern-Simons form of the Chern character, which we will denote by $Q_{2k+1}$.  Using the cone-flat connection, we decompose $Q_{2k+1}(A+\theta B)$ as $Q'_{2k+1}(A,B)+\theta Q''_{2k}(A,B)$ with both $Q'_{2k+1}(A,B), Q''_{2k}(A,B)\in\Omega^*(M,Ad\,P)$.  But note that if $A+\theta B$ is a cone-flat connection, then $d\,Q_{2k+1}=0$ and also $d\,Q''_{2k}(A,B)=0$.  Hence, $Q''_{2k}(A,B)$ is a characteristic form on $M$.  Furthermore, with $\dim M=2n$, $P_{2n}=\int_M Q''_{2n}(A,B)$ gives us a functional.  We will write down explicitly two cases, $\dim M=2, 4$.

For $\dim M=2$, we find
\begin{align}\label{bf2}
P_2(A,B)&=\frac{-1}{8\pi^2} \int_M \text{tr}\left[\omega\,B^2+2\,B\, F-d(A\,B)\right],
\end{align}
and for $\dim M=4$, we find
\begin{align}\label{bf4}
P_4(A,\Phi)=\frac{-i}{48} \int_M \text{tr}\left[3\,B\, F^2+3\,\omega\,B^2\,F+\omega^2\,B^3 - d(B dA\, A + \omega\, B\, A\, B +\frac{3}{2}B A^3 - B AdA)\right].
\end{align}
We will assume $M$ is a closed manifold.  Then the critical points of $P_2(A,B)$ are exactly symplectically flat connections, i.e. satisfying $F(A)=-B\,\omega$ and $d_A B =0$.   For $P_4(A,B)$, the symplectically flat connections are a subset of the critical points.  

Furthermore, if we substitute $F=\Phi\, \om$ and $B=-\Phi$ into \eqref{bf2} and \eqref{bf4}, we obtain the Chern-Simons invariants for symplectically flat connections
\begin{align*}
P_2(A, -\Phi) &=\frac{1}{8\pi^2} \int_{M^2} \text{tr}\,\Phi^2 \, \om \\
P_4(A, -\Phi) &=\frac{i}{48} \int_{M^4} \text{tr}\, \Phi^3\, \om^2
\end{align*}
which are just proportional to the integral of $\tr \,\Phi^{n+1}$ and generally are non-zero.

Finally, let us describe some characteristic classes of symplectically flat $G$-bundles, or more generally $\zeta$-flat $G$-bundles, when the Lie group $G$ is non-compact.  In essence, we can extend the theory of characteristic classes of flat bundles to $\zeta$-flat bundles by means of cone-flat connections.  (Note that a cone-flat connection is equivalent to a $\zeta$-flat connection if $d\theta=\zeta$, and to a symplectically flat connection in the special case where $d\theta=\om$.)  Our description will follow Morita's exposition of characteristic classes in \cite{Morita}*{Chapter 2.3}

Let $G$ be a Lie group and $\pi:P\to M$ be a  $\zeta$-flat $G$-bundle. Suppose $A\in\Omega^1(P,\mathfrak{g})$ is the corresponding connection form and its curvature is $\Phi\,\zeta$. Then $A$ induces a linear map
$$
\mathcal{A}:\mathfrak{g}^*\to\Omega^1(P)\oplus\theta\,\Omega^0(P),\qquad \alpha\mapsto\alpha\circ A -\theta\,\alpha(\Phi).
$$

$\mathcal{A}$ can be extended to a morphism from $\Lambda^*\mathfrak{g}^*$ to the cone $\Omega^*(P)[\theta]$ naturally. Explicitly, it sends $\alpha\in\Lambda^k\mathfrak{g}^*$ to $\alpha\circ A^{\otimes k}-\theta(\iota_{\Phi}\alpha\circ A^{\otimes (k-1)})\in \Omega^k(P)\oplus\theta\,\Omega^{k-1}(P)$. It is straightforward to verify that this morphism is a DGA morphism, where the differential on $\Lambda^*\mathfrak{g}^*$ is the exterior differentiation, and $d\theta=\zeta$ on $\Omega^*(P)[\theta]$. So it induces a morphism $H^*(\mathfrak{g})=H^*(\Lambda^*\mathfrak{g}^*)\to H^*(\Omega^*(P)[\theta])$ between cohomologies.

Take a maximal compact subgroup $K$ of $G$. Then the projection $\pi:P\to M$ can be decomposed as two maps
$$
P\to P/K\to M.
$$

Set $C^*(\mathfrak{g})=\Lambda^*\mathfrak{g}^*$ and $C^*(\mathfrak{g},K)$ be the space of $K$-invariant elements in $C^*(\mathfrak{g})$. That is,
\begin{align*}
C^k(\mathfrak{g},K) &=\{ \alpha\in C^k(\mathfrak{g})| \iota_X\alpha=0, \alpha(Ad_g X_1,\ldots,Ad_g X_k)=\alpha(X_1,\ldots,X_k) \\
& \quad \text{ for any } X,X_1,\ldots,X_k\in\mathfrak{k},g\in G  \}
\end{align*}
where $\mathfrak{k}$ is the Lie algebra of $K$. The image of $\mathcal{A}$ acting on $C^*(\mathfrak{g},K)$ is in the pullback of the projection from $\Omega^*(P/K)[\theta]$. So it induces the following commutative diagram
$$
\xymatrix{
C^*(\mathfrak{g}) \ar[r] & \Omega^*(P)[\theta] \\
C^*(\mathfrak{g},K) \ar[u] \ar[r] & \Omega^*(P/K)[\theta] \ar[u]
}
$$
So we obtain a morphism from $H^*(\mathfrak{g},K)$ to $H^*(\Omega^*(P/K)[\theta])$.

On the other hand, $P/K$ is a $G/K$ bundle over $M$ and it is well know that $G/K$ is diffeomorphic to a Euclidean space and in particular is contractible. Therefore, $P/K$ is homotopy equivalent to $M$ and we have $H^*(\Omega^*(P/K)[\theta])\simeq H^*(\Omega^*(M)[\theta])$.

The homomorphism $\mathcal{A}:H^*(\mathfrak{g},K)\to  H^*(\Omega^*(M)[\theta])$ is  the characteristic homomorphism, and for any $[\alpha]\in H^*(\mathfrak{g},K)$, the cohomology class $\mathcal{A}[\alpha]\in H^*(\Omega^*(M)[\theta])$ gives us the characteristic class of the $\zeta$-flat $G$-bundle corresponding to $[\alpha]$.  Notice that if $G$ is compact, then $K=G$ and the characteristic class becomes trivial since $H^*(\mathfrak{g},K)=0$.



\begin{bibdiv}
\begin{biblist}[\normalsize]

\bib{AS}{article}{
author={Ambrose, W.},
   author={Singer, I. M.},
   title={A theorem on holonomy},
   journal={Trans. Amer. Math. Soc.},
   volume={75},
   date={1953},
   pages={428--443},
   issn={0002-9947},
}

\bib{Atiyah-Bott}{article}{
   author={Atiyah, M. F.},
   author={Bott, R.},
   title={The Yang-Mills equations over Riemann surfaces},
   journal={Philos. Trans. Roy. Soc. London Ser. A},
   volume={308},
   date={1983},
   number={1505},
   pages={523--615},
   issn={0080-4614},
}

\bib{BPST}{article}{
   author={Belavin, A. A.},
   author={Polyakov, A. M.},
   author={Schwartz, A. S.},
   author={Tyupkin, Yu. S.},
   title={Pseudoparticle solutions of the Yang-Mills equations},
   journal={Phys. Lett. B},
   volume={59},
   date={1975},
   number={1},
   pages={85--87},
   issn={0370-2693},
}

\bib{BVV}{article}{
   author={Belitsky, A. V.},
   author={Vandoren, S.},
   author={van Nieuwenhuizen, P.},
   title={Yang-Mills and $\rm D$-instantons},
   note={Lectures from the Graduate School on Contemporary String Theory and
   Brane Physics (Turin, 2000)},
   journal={Classical Quantum Gravity},
   volume={17},
   date={2000},
   number={17},
   pages={3521--3570},
   issn={0264-9381},
}


\bib{Morita}{book}{
   author={Morita, S.},
   title={Geometry of characteristic classes},
   series={Translations of Mathematical Monographs},
   volume={199},
   publisher={American Mathematical Society, Providence, RI},
   date={2001},
   pages={xiv+185},
   isbn={0-8218-2139-3},
}

\bib{Morrison}{article}{
   author={Morrison, K.},
   title={Yang-Mills connections on surfaces and representations of the path
   group},
   journal={Proc. Amer. Math. Soc.},
   volume={112},
   date={1991},
   number={4},
   pages={1101--1106},
   issn={0002-9939},
}

\bib{TT}{article}{
   author={Tanaka, H. L.},
   author={Tseng, L.-S.},
   title={Odd sphere bundles, symplectic manifolds, and their intersection
   theory},
   journal={Camb. J. Math.},
   volume={6},
   date={2018},
   number={3},
   pages={213--266},
   issn={2168-0930},
}

\bib{TTY}{article}{
   author={Tsai, C.-J.},
   author={Tseng, L.-S.},
   author={Yau, S.-T.},
   title={Cohomology and Hodge theory on symplectic manifolds: III},
   journal={J. Differential Geom.},
   volume={103},
   date={2016},
   number={1},
   pages={83--143},
   issn={0022-040X},
}

\bib{TY2}{article}{
   author={Tseng, L.-S.},
   author={Yau, S.-T.},
   title={Cohomology and Hodge theory on symplectic manifolds: II},
   journal={J. Differential Geom.},
   volume={91},
   date={2012},
   number={3},
   pages={417--443},
   issn={0022-040X},
}
\bib{TZ}{article}{
   author={Tseng, L.-S.},
   author={Zhou, J.},
   title={Symplectic flatness and twisted primitive cohomology},
   journal={J. Geom. Anal.},
   volume={32},
   date={2022},
   number={282},
}

\end{biblist}
\end{bibdiv}

\vskip 1cm
\noindent
{Department of Mathematics, University of California, Irvine, CA 92697, USA}\\
{\it Email address:}~{\tt lstseng@math.uci.edu}
\vskip .5 cm
\noindent
{Yau Mathematical Sciences Center, Tsinghua University, Beijing, 100084, China}\\
{\it Email address:}~{\tt jiaweiz1990@mail.tsinghua.edu.cn}

\end{document}